\newtheorem{theorem}{Theorem}[section]
\newtheorem{lemma}[theorem]{Lemma}
\theoremstyle{remark}
\newtheorem*{remark}{Remark}
\numberwithin{equation}{section}
\title[]{Effective quantum unique ergodicity for Hecke--Maa{\ss} newforms and Landau--Siegel zeros}
\author{Jesse Thorner}
\address{Department of Mathematics, University of Illinois, Urbana, IL 61801, United States}
\email{jesse.thorner@gmail.com}
\begin{document}

\begin{abstract}
We show that Landau--Siegel zeros for Dirichlet $L$-functions do not exist or quantum unique ergodicity for $\mathrm{GL}_2$ Hecke--Maa\ss{} newforms holds with an effective rate of convergence.  This follows from a more general result:  Landau--Siegel zeros of Dirichlet $L$-functions repel the zeros of all other automorphic $L$-functions from the line $\mathrm{Re}(s)=1$.
\end{abstract}

\maketitle

\section{Introduction and statement of the main results}

Let $(\chi_i\pmod{q_i})_{i=1}^{\infty}$ be a sequence of primitive quadratic Dirichlet characters to increasing moduli $q_i$ whose Dirichlet $L$-functions $L(s,\chi_i)$ have a greatest real zero $\beta_i=1-\frac{\lambda_i}{\log q_i}$.  It remains an open problem to rule out the possibility that $\lim_{q_i\to\infty}\lambda_i = 0$, in which case the zeros $\beta_i$ are called Landau--Siegel zeros.  Siegel proved that for all $\varepsilon>0$, there exists an ineffective constant $c(\varepsilon)>0$ such that $\lambda_i\geq c(\varepsilon)q_i^{-\varepsilon}$.  The existence of Landau--Siegel zeros violates the generalized Riemann hypothesis (GRH), induces inequities in the distribution of primes in arithmetic progressions, and obfuscates the study of class numbers of quadratic fields \cite{Davenport}.  However, the existence of Landau--Siegel zeros implies many desirable results, including the infinitude of twin primes \cite{HB83}, the existence of primes in very short intervals \cite{FI04}, the infinitude of very large gaps between primes \cite{KevinFord,Granville_Siegel}, and the existence of a large proportion of nonvanishing central values of Dirichlet $L$-functions to prime moduli \cite{Pratt}.

Let $\Gamma=\mathrm{SL}_2(\mathbb{Z})$, let $\Delta=-y^2(\frac{\partial^2}{\partial x^2}+\frac{\partial^2}{\partial y^2})$ be the hyperbolic Laplacian on $\Gamma\backslash\mathbb{H}$, and let $(\varphi)$ be a basis of Hecke--Maa\ss{} newforms, orthonormal with respect to the Petersson inner product
\[
\langle\Psi,\Xi\rangle = \int_{\Gamma\backslash\mathbb{H}}\Psi(x+iy)\overline{\Xi(x+iy)}d\mu,\qquad \Psi,\Xi\in L^2(\Gamma\backslash\mathbb{H},\mu),
\]
which are eigenfunctions of $\Delta$ (with eigenvalues $\lambda_{\varphi}>\frac{1}{4}$) and all Hecke operators.  Consider the probability measures $d\mu_{\varphi}:=|\varphi(x+iy)|^2 d\mu$, where $d\mu:=y^{-2}dxdy$ is the usual hyperbolic measure; note that $\mu(\Gamma\backslash\mathbb{H})=\frac{\pi}{3}$.  For a test function $\Psi\in L^2(\Gamma\backslash\mathbb{H},\mu)$, we have $\mu_{\varphi}(\Psi) = \langle \Psi,|\varphi|^2 \rangle$ and $\mu(\Psi) = \langle\Psi,1\rangle$.  Lindenstrauss \cite{Lindenstrauss} and Soundararajan \cite{Sound_QUE_Maass} proved the (arithmetic) quantum unique ergodicity conjecture of Rudnick and Sarnak \cite{RS}:
\[
\lim_{\lambda_{\varphi}\to\infty}\mu_{\varphi}(\Psi) = \frac{3}{\pi}\mu(\Psi).
\]
Thus, as $\lambda_{\varphi}\to\infty$, the $L^2$ mass of $\varphi$ equidistributes on $\Gamma\backslash\mathbb{H}$ with respect to $\mu$.  No rate of convergence in terms of $\lambda_{\varphi}$ is known unconditionally, but an optimal rate is closely tied to GRH for a certain family of $L$-functions (see below).

We relate the rate of convergence in quantum unique ergodicity to the existence of Landau--Siegel zeros.  We also prove a new bound on the set of exceptional $\varphi$ for which an effective rate of convergence is not yet unconditionally known.  All implied constants are effectively computable.

\begin{theorem}
\label{thm:QUE}
	Let $\Psi\in L^2(\Gamma\backslash\mathbb{H},\mu)$ be a test function, and let $\varphi$ denote a Hecke--Maa{\ss} newform with Laplace eigenvalue $\lambda_{\varphi}$.
\begin{enumerate}
\item If $\chi\pmod{q}$ is a primitive quadratic Dirichlet character, $A\geq 1$, and $0<\lambda<A^{-1}$, then at least one of the following statements is true:
	\begin{enumerate}
		\item $L(s,\chi)\neq 0$ for $s\geq 1-\lambda/\log q$.
		\item If $\lambda_{\varphi}\in[q,q^{A}]$, then $\mu_{\varphi}(\Psi)=\frac{3}{\pi}\mu(\Psi)+O_{\Psi}((A\lambda)^{\frac{1}{10^{20}}}\log \lambda_{\varphi})$.
	\end{enumerate}
\item Let $M\geq 1$ and $\delta>2$.  We have $\mu_{\varphi}(\Psi) = \frac{3}{\pi}\mu(\Psi)+O_{\Psi}((\log\lambda_{\varphi})^{1-\frac{\delta}{2}})$ for all except $O((\log M)^{10^{20}\delta})$ Hecke--Maa\ss{} newforms $\varphi$ with $\lambda_{\varphi}\leq M$.
\end{enumerate}
\end{theorem}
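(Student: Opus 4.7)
The strategy rests on three ingredients: a Watson-type identity that expresses $\mu_\varphi(\Psi)-\tfrac{3}{\pi}\mu(\Psi)$ in terms of central values of automorphic $L$-functions, Soundararajan's weak-subconvexity machinery that converts zero-free regions into effective bounds on those central values, and a Deuring--Heilbronn type repulsion phenomenon that supplies the required zero-free region from the hypothesis of a Landau--Siegel zero.

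First, I would spectrally decompose $\Psi$ into its constant, cuspidal, and Eisenstein components. The constant part contributes $\tfrac{3}{\pi}\mu(\Psi)$ exactly, so it suffices to bound $\mu_\varphi(\phi)$ for each Hecke--Maa{\ss} newform $\phi$ and $\mu_\varphi(E(\cdot,\tfrac12+it))$ for each Eisenstein piece. Watson's identity gives
\[
|\mu_\varphi(\phi)|^2 \;\asymp\; \frac{L(\tfrac12,\phi)\,L(\tfrac12,\phi\times\mathrm{sym}^2\varphi)}{L(1,\mathrm{sym}^2\phi)\,L(1,\mathrm{sym}^2\varphi)^2},
\]
and the Eisenstein analogue produces $|L(\tfrac12+it,\mathrm{sym}^2\varphi)|^2$ in place of $L(\tfrac12,\phi\times\mathrm{sym}^2\varphi)$. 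Using known effective lower bounds for $L(1,\mathrm{sym}^2\varphi)$, the theorem reduces to establishing an effective power-of-$\log$ saving over the convex bound for $L(\tfrac12,\phi\times\mathrm{sym}^2\varphi)$ and $L(\tfrac12+it,\mathrm{sym}^2\varphi)$ as $\lambda_\varphi\to\infty$.

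Next, I would apply Soundararajan's weak subconvexity: a zero-free region of the form $\mathrm{Re}(s)\ge 1-\eta/\log C(\Pi)$ for the relevant $L$-function $L(s,\Pi)$ translates into a saving of roughly $(\log C(\Pi))^{-c\eta}$ over the convex bound. For part~(1), I would obtain such an $\eta$ via a Deuring--Heilbronn repulsion argument: assuming $\beta=1-\lambda/\log q$ is a real zero of $L(s,\chi)$, one constructs an auxiliary $L$-function of nonnegative Dirichlet coefficients containing $L(s,\chi)\zeta(s)L(s,\Pi)L(s,\Pi\otimes\chi)$ as a factor and applies a Landau-style integral identity to force any putative zero $\rho$ of $L(s,\Pi)$ with $\mathrm{Re}(\rho)$ near $1$ to be driven away from $\beta$. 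Quantitatively, for $\lambda_\varphi\in[q,q^A]$ this should yield $\eta\gg(A\lambda)^{c'}$ with an absolute $c'>0$, and inserting this into the weak-subconvexity step produces the error $O((A\lambda)^{1/10^{20}}\log\lambda_\varphi)$ claimed in part~(1). The principal obstacle is obtaining the repulsion with polynomial dependence on $\lambda$ uniformly over the automorphic conductor and over all heights $|\mathrm{Im}(\rho)|\le(\log\lambda_\varphi)^{O(1)}$; this requires careful choice of the auxiliary factor to preserve positivity and a uniform control of polar contributions.

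For part~(2), I would complement the previous step with a log-free zero density estimate for the families $\{L(s,\phi\times\mathrm{sym}^2\varphi):\lambda_\varphi\le M\}$ and $\{L(s,\mathrm{sym}^2\varphi):\lambda_\varphi\le M\}$: the number of $\varphi$ whose $L$-function admits a nontrivial zero with $\mathrm{Re}(s)>1-\sigma$ should be $\ll(\log M)^{c''\sigma\log M}$ for an absolute $c''>0$. Choosing $\sigma$ of order $\delta/\log M$ leaves at most $(\log M)^{10^{20}\delta}$ exceptional forms, and for all remaining $\varphi$ the zero-free region is wide enough to feed back into Soundararajan's bound and deliver the claimed error $(\log\lambda_\varphi)^{1-\delta/2}$, completing the proof.
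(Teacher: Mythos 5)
Your spectral reduction via Watson's identity and the Goldfeld--Hoffstein--Lieman bound for $L(1,\mathrm{Ad}^2\varphi)^{-1}$ to effective weak subconvexity for $L(\tfrac12,u_k\otimes\mathrm{Ad}^2\varphi)$ (on $\mathrm{GL}_6$) and $L(\tfrac12+it,\mathrm{Ad}^2\varphi)$ (on $\mathrm{GL}_3$) matches the paper, as does the plan to use a log-free zero density estimate for part (2). The gap is in your proposed mechanism for the Siegel-zero repulsion. A classical Deuring--Heilbronn argument built from an auxiliary Dirichlet series with nonnegative coefficients containing $\zeta(s)L(s,\chi)L(s,\Pi)L(s,\Pi\otimes\chi)$ (and, for positivity, $L(s,\Pi\times\widetilde{\Pi})$ and its $\chi$-twist) requires control of the coefficients of the Rankin--Selberg factor, and for $m\ge 5$ this needs progress toward GRC well beyond the Luo--Rudnick--Sarnak bound $\theta_m=\tfrac12-\tfrac1{m^2+1}$. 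Since you need $m=6$ for $L(s,u_k\otimes\mathrm{Ad}^2\varphi)$, this step would fail unconditionally; as the paper notes, precisely this obstruction blocked \cite{BTZ} for $m\ge 5$, and removing it is the point of \cref{thm:LFZDE}.

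The paper's actual mechanism is different: it proves a Siegel-zero-enhanced log-free zero density estimate over the family $\mathfrak{F}_m(Q)$ (\cref{thm:LFZDE}), gaining the factor $\min\{1,(1-\beta_\chi)\log(qQT)\}$ without GRC. Zero detection is performed on $F(z)=L(z,\pi)L(z+1-\beta_\chi,\pi\otimes\chi)$ via a S\'os--Tur\'an power-sum inequality for high derivatives of $F'/F$; the Siegel zero enters through the weights $1+\chi(p)p^{\beta_\chi-1}$ in the resulting prime sums, which are small on average by a lemma of Bombieri; and the sum over $\pi$ is controlled by the unconditional large sieve of \cite{TZ_GLn}. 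The resulting zero count is then inserted into the Soundararajan--Thorner inequality bounding $\log|L(\tfrac12+it,\pi)|$ in terms of $N_\pi(1-\alpha,|t|+6)$ for a suitably chosen $\alpha$, rather than via a literal zero-free region of width $\eta/\log C$. One further quantitative slip: in part (2), to leave $\ll(\log M)^{10^{20}\delta}$ exceptional forms you must take $1-\sigma\asymp\delta\log\log M/\log M$, not $\delta/\log M$; with the latter choice $M^{c(1-\sigma)}=e^{c\delta}$ is $O(1)$ rather than a power of $\log M$.
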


Soundararajan \cite[p. 1477]{Sound_weak} proved a version of \cref{thm:QUE}(2) with an exceptional set of size $\ll_{\varepsilon}M^{\varepsilon}$ for all $\varepsilon>0$, so \cref{thm:QUE}(2) gives a noticeable improvement.  (Note that the Weyl law tells us that there are $\sim\frac{1}{12}M$ Hecke--Maa{\ss} newforms $\varphi$ with $\lambda_{\varphi}\leq M$.)  \cref{thm:QUE}(1) shows that this exceptional set is empty if there exists a suitable sequence of primitive quadratic Dirichlet characters whose $L$-functions have Landau--Siegel zeros.

The connection between $L$-functions and quantum unique ergodicity is well-understood \cite{IS,Michel,Watson}.  The test function $\Psi$ has an orthonormal expansion with respect to the Petersson inner product in terms of the constant function; a basis of Hecke--Maa\ss{} newforms $(u_{k})_{k=1}^{\infty}$, which are eigenfunctions of $\Delta$ and all Hecke operators; and the Eisenstein series $E(z,\tfrac{1}{2}+it)$ with $t\in\mathbb{R}$ \cite[Chapter 15]{IK}.  Write $\lambda_k=\frac{1}{4}+t_k^2>\frac{1}{4}$ for the Laplace eigenvalue of $u_{k}$.  By orthonormality, we have that $\langle u_k,u_k\rangle=1$.

Let $\varphi$ be a Hecke--Maa\ss{} newform with $\lambda_{\varphi} = \frac{1}{4}+t_{\varphi}^2>\frac{1}{4}$, normalized so that $\langle \varphi,\varphi\rangle=1$.  Since $\mu_{\varphi}(\Psi)=\langle \Psi,|\varphi|^2 \rangle$ and $\mu(\Psi)=\langle \Psi,1\rangle$, the spectral theorem \cite[Theorem 15.5]{IK} implies that
\begin{equation}
\label{eqn:spectral}
\mu_{\varphi}(\Psi)-\frac{3}{\pi}\mu(\Psi)= \sum_{k=1}^{\infty}\langle \Psi,u_{k}\rangle \langle u_k ,|\varphi|^2 \rangle+\frac{1}{4\pi}\int_{\mathbb{R}}\langle\Psi, E(\cdot,\tfrac{1}{2}+it)\rangle \langle E(\cdot,\tfrac{1}{2}+it) ,|\varphi|^2 \rangle dt.
\end{equation}
Since $\Psi$ is a fixed test function, the $\lambda_{\varphi}$-dependence in the rate at which $\mu_{\varphi}(\Psi)$ tends to $\frac{3}{\pi}\mu(\Psi)$ is completely dictated by $\lambda_{\varphi}$-dependence in bounds for the inner products $\langle u_k ,|\varphi|^2 \rangle$ and $\langle E(\cdot,\tfrac{1}{2}+it),|\varphi|^2 \rangle$, where $t\in\mathbb{R}$ and $u_k$ is a fixed Hecke--Maa\ss{} newform.

Watson \cite[Theorem 3]{Watson} proved an elegant relationship between the inner product $\langle u_k,|\varphi|^2 \rangle$ and central values of $L$-functions:
\begin{align*}
|\langle u_k ,|\varphi|^2 \rangle|^2 &= \frac{1}{8}\frac{\Lambda(\frac{1}{2},u_k\otimes \varphi\otimes\widetilde{\varphi})}{\Lambda(1,\mathrm{Ad}^2 u_k)\Lambda(1,\mathrm{Ad}^2\varphi)^2}\\
&\ll \Big|\frac{\Gamma(\frac{\frac{1}{2}+2it_{\varphi}+it_k}{2}) \Gamma(\frac{\frac{1}{2}+2it_{\varphi}-it_k}{2}) \Gamma(\frac{\frac{1}{2}+it_k}{2})^2}{\Gamma(\frac{1+2it_{\varphi}}{2})^2\Gamma(\frac{1+2it_k}{2})}\Big|^2 \frac{L(\frac{1}{2},u_k)L(\frac{1}{2},\mathrm{Ad}^2\varphi\otimes u_k)}{L(1,\mathrm{Ad}^2 u_k)L(1,\mathrm{Ad}^2 \varphi)^2},
\end{align*}
where $\mathrm{Ad}^2$ denotes the adjoint square lift.  (See \cref{sec:L-functions} for the pertinent $L$-function notation.)  An application of Stirling's formula yields
\[
|\langle u_k ,|\varphi|^2 \rangle|\ll_{u_{k}} \lambda_{\varphi}^{-1/4}L(1,\mathrm{Ad}^2\varphi)^{-1}L(\tfrac{1}{2},u_k\otimes \mathrm{Ad}^2\varphi)^{1/2}.
\]
Goldfeld, Hoffstein, and Lieman \cite[Appendix]{HL} proved that\footnote{Their main result is $L(s,\mathrm{Ad}^2\varphi)\neq 0$ for $s\geq 1-c/\log(\lambda_{\varphi}+3)$; one can check that $c=1/500$ is permissible.} $L(1,\mathrm{Ad}^2\varphi )^{-1}\ll \log \lambda_{\varphi}$, hence
\begin{equation}
\label{eqn:inner_discrete}
\begin{aligned}
|\langle u_k ,|\varphi|^2 \rangle| \ll_{u_{k}} \lambda_{\varphi}^{-1/4}(\log \lambda_{\varphi})L(\tfrac{1}{2},u_k\otimes \mathrm{Ad}^2\varphi)^{1/2}.
\end{aligned}
\end{equation}
Using the unfolding method (see \cite[Equation 4.2]{RK}, for example), one computes
\[
|\langle E(\cdot,\tfrac{1}{2}+it) ,|\varphi|^2 \rangle| =\frac{\sqrt{\pi}}{2} \Big|\frac{\Gamma(\frac{\frac{1}{2}+2it_{\varphi}+it}{2}) \Gamma(\frac{\frac{1}{2}+2it_{\varphi}-it}{2}) \Gamma(\frac{\frac{1}{2}+it}{2})^2}{\Gamma(\frac{1+2it_{\varphi}}{2})^2\Gamma(\frac{1+2it}{2})}\Big| \frac{|\zeta(\frac{1}{2}+it) L(\frac{1}{2}+it,\mathrm{Ad}^2\varphi)|}{L(1,\mathrm{Ad}^2\varphi) |\zeta(1+2it)|}.
\]
Stirling's formula and standard bounds for the Riemann zeta function $\zeta(s)$ imply that there exists an absolute and effectively computable constant $A_1>0$ such that
\[
|\langle E(\cdot,\tfrac{1}{2}+it) ,|\varphi|^2 \rangle|\ll (3+|t|)^{A_1}\lambda_{\varphi}^{-1/4}L(1,\mathrm{Ad}^2\varphi)^{-1}|L(\tfrac{1}{2}+it,\mathrm{Ad}^2\varphi )|.
\]
Again, we invoke the work of Goldfeld, Hoffstein, and Lieman to deduce the bound
\begin{equation}
\label{eqn:inner_continuous}
|\langle E(\cdot,\tfrac{1}{2}+it) ,|\varphi|^2 \rangle| \ll (3+|t|)^{A_1} \lambda_{\varphi}^{-1/4}(\log \lambda_{\varphi})|L(\tfrac{1}{2}+it,\mathrm{Ad}^2\varphi )|.
\end{equation}

Let $g$ be a increasing function with $\lim_{x\to\infty}g(x)=\infty$.  Equations \eqref{eqn:spectral}-\eqref{eqn:inner_continuous} show that if there exists an absolute constant $A_2>0$ such that
\begin{equation}
\label{eqn:RS_Watson}
L(\tfrac{1}{2},u_k\otimes \mathrm{Ad}^2\varphi)\ll_{u_k}\frac{\lambda_{\varphi}^{1/2}}{g(\lambda_{\varphi})^2}\qquad\textup{and}\qquad |L(\tfrac{1}{2}+it,\mathrm{Ad}^2\varphi)|\ll (3+|t|)^{A_2}\frac{\lambda_{\varphi}^{1/4}}{g(\lambda_{\varphi})}
\end{equation}
for every $t\in\mathbb{R}$ and every fixed Hecke--Maa\ss{} form $u_k$, then
\begin{equation}
\label{eqn:rate_convgc}
\mu_{\varphi}(\Psi)=\frac{3}{\pi}\mu(\Psi) + O_{\Psi}\Big(\frac{\log\lambda_{\varphi}}{g(\lambda_{\varphi})}\Big).
\end{equation}
Gelbart and Jacquet \cite{GJ} proved that $L(s,\mathrm{Ad}^2\varphi)$ is the $L$-function of a cuspidal automorphic representation of $\mathrm{GL}_3$, and the combined work of Kim and Shahidi \cite{KS2} and Ramakrishnan and Wang \cite{Ram_Wang} shows that $L(s,u_k\otimes \mathrm{Ad}^2\varphi)$ is the $L$-function of a cuspidal automorphic representation of $\mathrm{GL}_6$ when $\varphi$ is not a twist o.  Therefore, the rate of convergence in \eqref{eqn:rate_convgc} is directly tied to bounds for certain $\mathrm{GL}_3$ and $\mathrm{GL}_6$ automorphic $L$-functions on the critical line $\mathrm{Re}(s)=\frac{1}{2}$.

We address this in a broader context.  Let $\mathbb{A}$ be the ring of adeles over $\mathbb{Q}$, and let $\mathfrak{F}_{m}$ be the set of all cuspidal automorphic representations of $\mathrm{GL}_m(\mathbb{A})$ with unitary central character (which we normalize to be trivial on the diagonally embedded copy of the reals so that $\mathfrak{F}_{m}$ is a discrete set).  Let $m\geq 2$ and $\pi\in\mathfrak{F}_{m}$.  The analytic conductor $C(\pi)$ of $\pi$ (see \eqref{eqn:analytic_conductor_def}) is a useful measure of the spectral and arithmetic complexity of $\pi$.

The work of Heath-Brown \cite{HB}; Luo, Rudnick, and Sarnak \cite{LRS}; and M{\"u}ller and Speh \cite{MS} produces the convexity bound $|L(\frac{1}{2},\pi)|\ll_{m}C(\pi)^{1/4}$.  In many applications, this bound barely fails to be of use, and even small improvements would yield deep results.  GRH implies that $|L(\tfrac{1}{2},\pi)|\leq C(\pi)^{o(1)}$, which yields \eqref{eqn:rate_convgc} with an optimal error term of $O_{\Psi}(\lambda_{\varphi}^{-1/4+o(1)})$.  Subconvexity bounds of the form $|L(\frac{1}{2},\pi)|\ll_m C(\pi)^{1/4-\delta_m}$ for some fixed $\delta_m>0$ are known for many classes of $L$-functions, but a nontrivial error term in \eqref{eqn:rate_convgc} remains out of reach.

Under a weak form of the generalized Ramanujan conjecture (GRC), Soundararajan \cite{Sound_weak} proved that $|L(\tfrac{1}{2},\pi)|\ll_{m,\varepsilon} C(\pi)^{1/4}(\log C(\pi))^{-1+\varepsilon}$ for all $\varepsilon>0$.  By a different method, Soundararajan and the author \cite{ST} unconditionally proved
\begin{equation}
\label{eqn:ST_weak}
|L(\tfrac{1}{2},\pi)|\ll_{m} C(\pi)^{\frac{1}{4}}(\log C(\pi))^{-\frac{1}{10^{17}m^3}}
\end{equation}
for all $m\geq 1$.  We will refine the results in \cite{ST} when Landau--Siegel zeros for Dirichlet $L$-functions exist.  In what follows, let $\mathfrak{F}_{m}(Q)=\{\pi\in\mathfrak{F}_{m}\colon C(\pi)\leq Q\}$.

\begin{theorem}
\label{thm:weak}
	Let $t\in\mathbb{R}$, and Let $m\geq 2$ be an integer.
\begin{enumerate}
\item If $\chi\pmod{q}$ is a primitive quadratic Dirichlet character, $A\geq 1$, and $0<\lambda<A^{-1}$, then at least one of the following statements is true:
	\begin{enumerate}
		\item $L(s,\chi)\neq 0$ for $s\geq 1-\lambda/\log q$.
		\item If $A\geq 1$ and $C(\pi)\in[q,q^{A}]$, then
		\[
		|L(\tfrac{1}{2}+it,\pi)|\ll_{m} (3+|t|)^{O(m)} C(\pi)^{\frac{1}{4}}(\log C(\pi))^{-\frac{1}{4\times 10^{16}m^4}}(A\lambda)^{\frac{1}{4\times 10^{16}m^4}}.
		\]
	\end{enumerate}
\item Let $\delta\geq 0$.  For all except $O_m((\log Q)^{10^{16}m^4 \delta})$ of the $\pi\in\mathfrak{F}_{m}(Q)$, we have
\[
|L(\tfrac{1}{2}+it,\pi)|\ll_m (3+|t|)^{O(m)} C(\pi)^{\frac{1}{4}}(\log C(\pi))^{-\delta}.
\]
\end{enumerate}
\end{theorem}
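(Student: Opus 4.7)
The plan is to combine the weak-subconvexity machinery of \cite{ST}, which converts zero-free regions of $L(s,\pi)$ near the line $\mathrm{Re}(s)=1$ into power-saving bounds on $|L(\tfrac{1}{2}+it,\pi)|$, with a Deuring--Heilbronn style zero repulsion statement (the ``more general result'' advertised in the abstract) that sharpens those zero-free regions whenever a Landau--Siegel zero exists for a Dirichlet $L$-function.

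For Part (1), suppose statement (a) fails, so $L(s,\chi)$ has a real zero $\beta = 1 - \lambda/\log q$ with $\lambda < A^{-1}$. The first step is to invoke the zero-repulsion theorem to conclude that, for every $\pi \in \mathfrak{F}_m$ with $C(\pi) \in [q,q^A]$, the function $L(s,\pi)$ has no zeros in an enlarged zero-free neighbourhood of $1 + it$ whose quality is governed by $A\lambda$. A natural mechanism is positivity of the Dirichlet coefficients of the Rankin--Selberg $L$-function $L(s,(\pi\otimes\chi)\otimes\widetilde{(\pi\otimes\chi)})$, which, in the spirit of Hoffstein--Ramakrishnan, forces a hypothetical nearby zero of $L(s,\pi)$ into numerical conflict with the Siegel zero $\beta$. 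The second step is to feed this enhanced zero-free region into the Soundararajan--Thorner bound, keeping explicit track of how the saving exponent depends on the quality of the zero-free region and on $m$. After optimization, the unconditional saving in \eqref{eqn:ST_weak} is refined by an extra factor of $(A\lambda)^{1/(4\cdot 10^{16}m^4)}$, with the explicit constants arising from the interaction between the Deuring--Heilbronn width and the exponent produced by the ST machinery.

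For Part (2), the plan is to replace the deterministic zero-repulsion input by a statistical one. Choose $\eta = \eta(m,\delta)$ so that the Soundararajan--Thorner machinery, applied to any $\pi$ whose $L$-function has no zeros in $\mathrm{Re}(s) \geq 1 - \eta/\log C(\pi)$ with $|\mathrm{Im}(s)| \leq (3+|t|)^{O(1)}$, produces the saving $(\log C(\pi))^{-\delta}$. A log-free zero-density estimate for the family $\mathfrak{F}_m(Q)$, provable through Rankin--Selberg positivity combined with a large-sieve style inequality across the family, then bounds the number of $\pi \in \mathfrak{F}_m(Q)$ that violate this zero-free condition by a quantity of the shape $(\log Q)^{c m^4 \eta}$ for an explicit constant $c$. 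Taking $\eta$ proportional to $\delta$ balances the two exponents and yields the stated exceptional-set bound $O_m((\log Q)^{10^{16} m^4 \delta})$; for every non-exceptional $\pi$, the ST bound delivers the claimed estimate directly.

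The main obstacle is obtaining the zero-repulsion theorem with an effective, power-type dependence on $A\lambda$ together with polynomial control in $m$, uniform over $\pi \in \mathfrak{F}_m$. Classical Deuring--Heilbronn arguments produce only a logarithmic dependence on $A\lambda$, which is too weak for the claimed saving; the required sharper analysis must convert the Siegel zero $\beta$ into a quantitative lower bound on $|L(\sigma,\pi\otimes\chi)|$ for $\sigma$ near $1$, and then, via a Jensen-type complex-analytic inequality applied on a suitable disk, into the desired zero-free region for $L(s,\pi)$. Threading this argument uniformly through the $\mathrm{GL}_m$ family while tracking every dependence on $m$ is the technical heart of the paper and the key step on which both parts of \cref{thm:weak} rest.
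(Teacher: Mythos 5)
Your high-level plan matches the paper's architecture: both parts rest on an inequality that converts information about zeros of $L(s,\pi)$ near $\mathrm{Re}(s)=1$ into bounds for $|L(\tfrac12+it,\pi)|$ (the paper's \cref{lem:ST1/2}, a mild extension of \eqref{eqn:ST_1/2} from $s=\tfrac12$ to $s=\tfrac12+it$), and then one feeds in a log-free zero-density input --- deterministic, enhanced by the Siegel zero, for Part (1), and statistical over $\mathfrak{F}_m(Q)$ for Part (2). Your Part (2) plan is essentially what the paper does: apply \eqref{eqn:LFZDE} to show that apart from $O_m((\log Q)^{10^{16}m^4\delta})$ representations, $L(s,\pi)$ has no zeros in a region $\mathrm{Re}(s)\ge 1-\alpha$ with $\alpha$ of size $\delta\log\log C(\pi)/\log C(\pi)$, then plug into \cref{lem:ST1/2}.

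However, the mechanism you propose for the zero repulsion in Part (1) is a genuine gap. You suggest establishing the enhanced zero-free region via positivity of the Dirichlet coefficients of $L(s,(\pi\otimes\chi)\otimes\widetilde{(\pi\otimes\chi)})$, a Hoffstein--Ramakrishnan style comparison against the Siegel zero, followed by a Jensen-type estimate. This is precisely the route taken by Brumley--Thorner--Zaman in \cite{BTZ}, and the paper explicitly points out that for $m\ge 5$ this route relies on unproven progress toward GRC: the Rankin--Selberg positivity argument needs control of Satake and Langlands parameters that is only available conditionally. The paper's central technical contribution, \cref{thm:LFZDE}, is designed to remove exactly this obstruction. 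It does so by a different method altogether: a S\'os--Tur\'an power-sum zero detector applied to the auxiliary function $F(z)=L(z,\pi)L(z+1-\beta_\chi,\pi\otimes\chi)$, a Gallagher-type mean value theorem plus the Duke--Kowalski/Thorner--Zaman large sieve over $\mathfrak{F}_m(Q)$, and finally a lemma of Bombieri giving
\[
\sum_{p\in[N_0,N_1]}\frac{(\log p)^2}{p}\bigl(1+\chi(p)p^{\beta_\chi-1}\bigr)^2\ll_m \frac{K^3}{\eta^2}\min\Bigl\{1,\frac{1-\beta_\chi}{\eta}\Bigr\},
\]
which is where the factor $\min\{1,(1-\beta_\chi)\log(qQT)\}$ --- and hence the linear dependence on $A\lambda$ --- enters. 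The Siegel zero manifests as a cancellation in a $\chi$-twisted prime sum, not as a repulsion deduced from a Rankin--Selberg $L$-function of $\pi\otimes\chi$ with its conjugate. If you follow your proposed route, you would reprove the $m\le4$ case of \cite{BTZ} but would not obtain the unconditional result for general $m$ that \cref{thm:weak}(1) asserts. Your remark that ``classical Deuring--Heilbronn arguments produce only a logarithmic dependence on $A\lambda$'' also understates what Bombieri-style density arguments give; the quantitative bottleneck in \cite{BTZ} was the GRC dependence for $m\geq 5$, not the exponent of $\lambda$.

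A smaller point: one also needs to verify, via \cref{lem:PAGE}, that $\beta_\chi$ itself is not a zero of $L(s,\pi)$ when $m^2(1-\beta_\chi)\log(qQ)$ is small, so that the hypothesized zero $\rho_0$ of $L(s,\pi)$ is distinct from the Siegel zero; your proposal does not address this, though it is a short Hoffstein--Ramakrishnan style observation.
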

\begin{remark}
Brumley and Mili{\'c}evi{\'c} \cite{BM} proved that $\#\mathfrak{F}_{m}(Q)\gg_m Q^{m+1}$, so the exceptional set in \cref{thm:weak}(2) is quite small.  \cref{thm:weak}(1) shows that this exceptional set is empty if there exists a suitable sequence of primitive quadratic characters with Landau--Siegel zeros.
\end{remark}
\begin{proof}[Proof of \cref{thm:QUE}]
We have $C(u_k\otimes \mathrm{Ad}^2\varphi)\asymp_{u_k}\lambda_{\varphi}^2$ and $C(\mathrm{Ad}^2\varphi)\asymp\lambda_{\varphi}$, so the theorem follows from \cref{thm:weak} and the fact \eqref{eqn:RS_Watson} implies \eqref{eqn:rate_convgc}.
\end{proof}

Given $\pi\in\mathfrak{F}_{m}$, define $N_{\pi}(\sigma,T)=\#\{\rho=\beta+i\gamma\colon \beta\geq\sigma,~|\gamma|\leq T,~L(\rho,\pi)=0\}$, where the zeros are counted with multiplicity.  For $L(s,\pi)$, GRH is equivalent to the assertion that $N_{\pi}(\sigma,T)=0$ for all $\sigma>\frac{1}{2}$.  The starting point for the proof of \cref{thm:weak} is the following inequality:  If $0\leq\alpha<\frac{1}{2}$, then
\begin{equation}
\label{eqn:ST_1/2}
\log|L(\tfrac{1}{2},\pi)|\leq\Big(\frac{1}{4}-\frac{\alpha}{10^{9}}\Big)\log C(\pi) + \frac{\alpha}{10^7} N_{\pi}(1-\alpha,6)+2\log|L(\tfrac{3}{2},\pi)|+O(m^2).
\end{equation}
This was proved by Soundararajan and the author \cite[Theorem 1.1]{ST}.  The combined work of Luo--Rudnick--Sarnak \cite{LRS} and M{\"u}ller--Speh \cite{MS} shows that $2\log|L(\tfrac{3}{2},\pi)|\ll m^2$.  Therefore, \eqref{eqn:ST_1/2} relates bounds for $L(\tfrac{1}{2},\pi)$ to the distribution of zeros of $L(s,\pi)$ near $s=1$.  Soundararajan and the author \cite[Corollary 2.6]{ST} also proved the {\it log-free zero density estimate}
\begin{equation}
\label{eqn:ST_LFZDE}
N_{\pi}(\sigma,T)\ll_{m} (C(\pi)T)^{10^7 m^3(1-\sigma)}.
\end{equation}
They showed that \eqref{eqn:ST_1/2} and \eqref{eqn:ST_LFZDE} imply \eqref{eqn:ST_weak} (choose $\alpha=(10^8 m^3\log C(\pi))^{-1}\log \log C(\pi)$).

Variations of \eqref{eqn:ST_LFZDE} in which one averages over $\pi\in\mathfrak{F}_m(Q)$ have been known for a long time assuming unproven progress toward the generalized Ramanujan conjecture (GRC) \cite{BTZ,KM}.  The author and Zaman \cite[Theorem 1.2] {TZ_GLn} recently proved such an estimate unconditionally:
\begin{equation}
\label{eqn:LFZDE}
\sum_{\pi\in\mathfrak{F}_{m}(Q)}N_{\pi}(\sigma,T)\ll_m (QT)^{10^7 m^4(1-\sigma)}.
\end{equation}
\cref{thm:weak} follows from the following refinement of \eqref{eqn:LFZDE} arising from Landau--Siegel zeros.
\begin{theorem}
	\label{thm:LFZDE}
Let $Q,T\geq 3$ and $0\leq\sigma\leq 1$.  Let $m\geq 2$.  If $\chi\pmod{q}$ is a primitive quadratic Dirichlet character having a real zero $\beta_{\chi}\in(\frac{1}{2},1)$ and $q\leq Q$, then
\[
\sum_{\pi\in\mathfrak{F}_{m}(Q)}N_{\pi}(\sigma,T)\ll_m \min\{1,(1-\beta_{\chi})\log(qQT)\}(qQT)^{10^7 m^4(1-\sigma)}.
\]
\end{theorem}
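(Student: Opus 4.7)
The plan is to refine the proof of the log-free zero density estimate \eqref{eqn:LFZDE} from \cite{TZ_GLn} by injecting a Deuring--Heilbronn-type zero repulsion stemming from the Landau--Siegel zero $\beta_{\chi}$. The argument in \cite{TZ_GLn} detects zeros $\rho$ of $L(s,\pi)$ lying in the rectangle $[\sigma,1]\times[-T,T]$ by means of a short Dirichlet polynomial $M_{\pi}(s)$ that is large at each such $\rho$; squaring and summing across the zeros and across $\pi\in\mathfrak{F}_{m}(Q)$, one bounds the left-hand side of \eqref{eqn:LFZDE} by a second moment of Dirichlet polynomials, which is in turn handled by a large sieve inequality for the family $\mathfrak{F}_{m}(Q)$. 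My goal is to preserve this overall architecture and introduce the savings $(1-\beta_{\chi})\log(qQT)$ at the positivity step that feeds the large sieve.

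To extract the Siegel-zero savings, I would pass from $\pi$ to the isobaric automorphic representation
\[
\Pi \;:=\; \mathbf{1}\boxplus\chi\boxplus\pi\boxplus(\pi\otimes\chi).
\]
Because $\chi$ is quadratic, the Rankin--Selberg $L$-function $L(s,\Pi\times\widetilde{\Pi})$ has nonnegative Dirichlet coefficients, and its factorization contains $\zeta(s)^{2}$, $L(s,\chi)^{2}$, $L(s,\pi)^{2}$, and $L(s,\pi\otimes\chi)^{2}$ among its factors. Thus every zero $\rho$ of $L(s,\pi)$ and the Siegel zero $\beta_{\chi}$ are zeros of $L(s,\Pi\times\widetilde{\Pi})$, while a double pole at $s=1$ is contributed by $\zeta(s)^{2}$. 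Applying a standard positivity / explicit formula argument for $-\frac{L'}{L}(s,\Pi\times\widetilde{\Pi})$ at a point $s_{0}=1+\kappa(1-\beta_{\chi})$ with a suitable small $\kappa>0$, and exploiting the classical Deuring--Heilbronn mechanism, then damps each zero $\rho$ in the critical strip by a factor of order $\min\{1,(1-\beta_{\chi})\log(qC(\pi)T)\}$. Bounding $C(\pi)\le Q$ gives the advertised factor $\min\{1,(1-\beta_{\chi})\log(qQT)\}$.

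Inserting this damping into the detection--and--large-sieve machinery of \cite{TZ_GLn}, applied uniformly to each $\pi\in\mathfrak{F}_{m}(Q)$, replaces the bound $(QT)^{10^{7}m^{4}(1-\sigma)}$ by $\min\{1,(1-\beta_{\chi})\log(qQT)\}(qQT)^{10^{7}m^{4}(1-\sigma)}$; the inflation of $QT$ to $qQT$ in the base accounts for the analytic conductors of the twisted factors $L(s,\pi\otimes\chi)$ that enter the positivity step through $\Pi$, which affects only the implicit constant in the exponent.

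The step I expect to be the main obstacle is uniformity in $\pi$: the Deuring--Heilbronn argument applied to $L(s,\Pi\times\widetilde{\Pi})$ must yield the $(1-\beta_{\chi})$ savings with explicit and uniform control of the archimedean factors and of the bad primes dividing $q$, so that the gain survives both the averaging over $\pi\in\mathfrak{F}_{m}(Q)$ and the application of the zero detector. This should be achievable by the standard device of choosing the test function in the explicit formula to localize near $s=1$ on a scale of $(1-\beta_{\chi})$ and by invoking the unconditional bounds for local parameters and $L(1,\pi\times\widetilde{\pi})^{\pm 1}$ used already in \cite{ST,TZ_GLn}.
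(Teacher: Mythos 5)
Your route is genuinely different from the paper's, and I do not believe it closes. The paper's savings do not come from a separate Deuring--Heilbronn positivity argument applied to $L(s,\Pi\times\widetilde{\Pi})$; they come from building the Siegel zero directly into the zero detector. Concretely, the paper works with the shifted product
\[
F(z)=L(z,\pi)\,L(z+1-\beta_{\chi},\pi\otimes\chi),\qquad G_k(z)=\frac{(-1)^k}{k!}\Big(\frac{F'}{F}\Big)^{(k)}(z),
\]
and detects a hypothetical zero $\rho_0$ of $L(s,\pi)$ near $1+i\tau$ by making $G_k$ large (via the S\'os--Tur\'an power-sum theorem) and simultaneously bounding $G_k$ above by a Dirichlet polynomial whose prime coefficients are $\lambda_{\pi}(p)\bigl(1+\chi(p)p^{\beta_{\chi}-1}\bigr)$. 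After squaring, Cauchy--Schwarz, and the $\mathfrak{F}_m(Q)$ large sieve from \cite{TZ_GLn}, the entire Siegel-zero gain is extracted from Bombieri's \textit{Lemme C}, which bounds $\sum_{p}\frac{(\log p)^2}{p}\bigl(1+\chi(p)p^{\beta_{\chi}-1}\bigr)^2$ by $\ll \frac{K^3}{\eta^2}\min\{1,(1-\beta_{\chi})/\eta\}$. The factor $\min\{1,(1-\beta_{\chi})\log(qQT)\}$ thus appears as a mean-square savings on the arithmetic side of the detector, not as a ``damping'' applied zero-by-zero.

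Two specific gaps in your sketch. First, a positivity/explicit-formula argument for $-\frac{L'}{L}(s_0,\Pi\times\widetilde{\Pi})$ near $s_0=1$ produces, at best, a zero-repulsion statement: if $\beta_{\chi}$ is very close to $1$, then $L(s,\pi)$ is zero-free in a widened region (this is essentially what \cref{lem:PAGE} and Hoffstein's theorem already deliver). It does not, by itself, produce a multiplicative factor $\min\{1,(1-\beta_\chi)\log(qQT)\}$ multiplying a log-free count $\ll (qQT)^{c(1-\sigma)}$ uniformly for all $\sigma$ down to a fixed constant below $1$; in particular, you give no mechanism for how the repulsion ``inserts'' into the squared Dirichlet polynomial that the large sieve actually bounds. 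Second, with your choice $\Pi=\mathbf{1}\boxplus\chi\boxplus\pi\boxplus(\pi\otimes\chi)$, the Rankin--Selberg $L$-function $L(s,\Pi\times\widetilde{\Pi})$ has a pole at $s=1$ of order $4$ (from $\zeta(s)^2$ and $L(s,\pi\times\widetilde\pi)^2$), but $\beta_\chi$ contributes a zero of order only $2$ through $L(s,\chi)^2$, so the pole and the Siegel zero do not nearly cancel, and the classical Deuring--Heilbronn cancellation you are invoking does not materialize without a more careful bookkeeping of which factors are known to be nonvanishing at $\beta_\chi$. You would need to re-engineer the construction to build the shift $1-\beta_\chi$ into the detector as the paper does.
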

Bombieri \cite[Theorem 14]{Bombieri2} proved a version of \cref{thm:LFZDE} for zeros of Dirichlet $L$-functions with $\beta_{\chi}$ omitted from the count (if it exists).  A classical result of Deuring and Heilbronn quantifies the extent to which $\beta_{\chi}$ repels zeros of other Dirichlet $L$-functions from $\mathrm{Re}(s)=1$ \cite[Ch. 18]{IK}.  Bombieri's result recovers this phenomenon as a corollary, along with Siegel's ineffective upper bound on $\beta_{\chi}$.  When $m\leq 4$, \cref{thm:LFZDE} recovers recent work of Brumley, the author, and Zaman \cite[Theorem 1.7 when $\pi_0$ is trivial]{BTZ}.  When $m\geq 5$, the proofs in \cite{BTZ} relied on unproven progress toward GRC, which we remove here.  This opens up opportunities to study the arithmetic consequences of Landau--Siegel zeros (should they exist) in many other settings.  \cref{thm:QUE} ultimately uses \cref{thm:LFZDE} with $m=3$ and $m=6$.

Unlike Bombieri's setting, we can ensure that $L(\beta_{\chi},\pi)\neq 0$ for all $\pi\in\mathfrak{F}_m(Q)$, provided that $m\geq 2$ and $m^2(1-\beta_{\chi})\log(qQ)<c$ for a certain absolute and effectively computable constant $c>0$ (see \cref{lem:PAGE} below). Combined with \cref{thm:LFZDE}, this fact enables us to deduce for each $\pi\in\mathfrak{F}_m(Q)$ a much stronger zero-free region for $L(s,\pi)$ than unconditional methods permit, akin to the aforementioned result of Deuring and Heilbronn, when $\beta_{\chi}$ is sufficiently close to 1.

\subsection*{Acknowledgements}

I thank Kevin Ford, Peter Humphries, and the anonymous referee for their helpful comments.

\section{Properties of $L$-functions}
\label{sec:L-functions}


We recall some standard facts about $L$-functions arising from automorphic representations and their twists by Dirichlet characters; see \cite{ST} for a convenient summary.

\subsection{Standard $L$-functions}
\label{subsec:standard}
Let $\pi\in\mathfrak{F}_{m}$ have arithmetic conductor $q_{\pi}$ with $m\geq 2$.  We express $\pi$ is a tensor product $\otimes_{v}\pi_{v}$ of smooth admissible representations of $\mathrm{GL}_m(\mathbb{Q}_{p})$ and $\mathrm{GL}_m(\mathbb{R})$, where $v$ varies over the places of $\mathbb{Q}$.  If $v$ is nonarchimedean and corresponds with a prime $p$, in which case we write $\pi_p$ instead of $\pi_v$, then there are $m$ Satake parameters $\alpha_{1,\pi}( p ),\ldots,\alpha_{m,\pi}( p )$ from which we define
\[
	L(s,\pi_{ p })=\prod_{j=1}^{m}\Big(1-\frac{\alpha_{j,\pi}( p )}{  p^{s}}\Big)^{-1}=\sum_{j=0}^{\infty}\frac{\lambda_{\pi}( p^j)}{  p^{js}}.
\]
We have $\alpha_{j,\pi}( p )\neq0$ for all $j$ whenever $ p \nmid q_{\pi}$, and it might be the case that $\alpha_{j,\pi}( p )=0$ for some $j$ when $ p | q_{\pi}$.  The finite part of the standard $L$-function $L(s,\pi)$ associated to $\pi$ is of the form
\[
L(s,\pi)=\prod_{ p } L(s,\pi_{ p })=\sum_{n=1}^{\infty}\frac{\lambda_{\pi}(n)}{ n^s},\qquad \mathrm{Re}(s)>1.
\]
If $v$ is archimedean, in which case we write $\pi_{\infty}$ instead of $\pi_v$, there are $m$ Langlands parameters $\mu_{\pi}(j)\in\mathbb{C}$ for $1\leq j\leq m$ from which we define
\[
L(s,\pi_{\infty}) = \pi^{-\frac{ms}{2}}\prod_{j=1}^{m}\Gamma\Big(\frac{s+\mu_{\pi}(j)}{2}\Big).
\]
We define for $t\in\mathbb{R}$ the analytic conductor
\begin{equation}
\label{eqn:analytic_conductor_def}
C(\pi,t)=q_{\pi}\prod_{j=1}^{m}(3+|it+\mu_{\pi}(j)|),\qquad C(\pi)=C(\pi,0).
\end{equation}
There exists $\theta_m\in[0,\frac{1}{2}-\frac{1}{m^2+1}]$ such that we have the uniform bounds
\begin{equation}
	\label{eqn:GRC}
|\alpha_{j,\pi}( p )|\leq p^{\theta_m},\qquad \mathrm{Re}(\mu_{\pi}(j))\geq -\theta_m.
\end{equation}
(This follows from \cite{LRS} when $v$ is unramified and \cite{MS} when $v$ is ramified.)  GRC asserts that one may take $\theta_m=0$.

The completed $L$-function $\Lambda(s,\pi) = q_{\pi}^{s/2}L(s,\pi)L(s,\pi_{\infty})$ is entire of order 1.  Each pole of $L(s,\pi_{\infty})$ is a trivial zero of $L(s,\pi)$.  Since $\Lambda(s,\pi)$ is entire of order 1, a Hadamard factorization
\[
\Lambda(s,\pi)=e^{a_{\pi}+b_{\pi}s}\prod_{\rho}\Big(1-\frac{s}{\rho}\Big)e^{s/\rho}
\]
exists, where $\rho$ varies over the nontrivial zeros of $L(s,\pi)$.  These zeros satisfy $0\leq \mathrm{Re}(\rho)\leq 1$. 

\subsection{Twisted $L$-functions}
\label{subsec:RS}

If $\pi\in\mathfrak{F}_{m}$ and $\chi\pmod{q}$ is a primitive Dirichlet character, then $\pi\otimes\chi\in\mathfrak{F}_{m}$.  If $ p \nmid q_{\pi}q$, then we have the equality of sets $\{\alpha_{j,\pi\otimes\chi}( p )\}=\{\alpha_{j,\pi}( p )\chi( p )\}$.  See \cite[Appendix]{ST} for a description of $\alpha_{j,\pi\otimes\chi}( p )$ when $ p |q_{\pi}q$.   Note that if $\chi_0$ is the trivial character modulo 1, then $L(s,\pi\otimes\chi_0)=L(s,\pi)$.  The Langlands parameters $\mu_{\pi\otimes\chi}(j)$ are  given by $\{\mu_{\pi\otimes\chi}(j)\} = \{\mu_{\pi}(j)+\frac{1-\chi(-1)}{2}\}$ whenever $\pi_{\infty}$ is unramified.  As with $L(s,\pi)$, we define
\[
C(\pi\otimes\chi,t)=q_{\pi\otimes\chi}\prod_{j=1}^{m} (3+|it+\mu_{\pi\otimes\chi}(j)|),\quad C(\pi\otimes\chi)=C(\pi\otimes\chi,0).
\]
Bushnell and Henniart \cite[Theorem 1]{BH} proved that $q_{\pi\otimes\chi}\leq q^{m}q_{\pi}$, which implies that
\begin{equation}
\label{eqn:BH}
C(\pi\otimes\chi,t)\leq C(\pi\otimes\chi)(3+|t|)^{m},\qquad C(\pi\otimes\chi)\leq e^{O(m)} q^m C(\pi).
\end{equation}

Define the functions $\Lambda_{\pi}(n)$ and $\Lambda_{\pi\otimes\chi}(n)$ by the Dirichlet series identities
\[
\sum_{n=1}^{\infty}\frac{\Lambda_{\pi}(n)}{n^s}=-\frac{L'}{L}(s,\pi),\qquad \sum_{n=1}^{\infty}\frac{\Lambda_{\pi\otimes\chi}(n)}{n^s}=-\frac{L'}{L}(s,\pi\otimes\chi),\qquad \mathrm{Re}(s)>1.
\]

\begin{lemma}
	\label{lem:mertens}
	If $\pi\in\mathfrak{F}_{m}$, $\chi$ is a primitive quadratic character, and $\eta>0$, then
	\[
	\sum_{n=1}^{\infty}\frac{|\Lambda_{\pi\otimes\chi}(n)|}{n^{1+\eta}}\leq\frac{1}{\eta}+\frac{m}{2}\log C(\pi)+O(m^2).
	\]
\end{lemma}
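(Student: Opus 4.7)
The plan is to reduce the bound to two logarithmic derivatives with non-negative Dirichlet coefficients, namely $-\zeta'/\zeta$ and $-L'/L(\cdot,\pi\otimes\widetilde{\pi})$, both of which can be estimated by classical means.

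First I would establish the pointwise inequality
\[
|\Lambda_{\pi\otimes\chi}(n)|^{2}\leq \Lambda(n)\,\Lambda_{\pi\otimes\widetilde{\pi}}(n)
\]
for every prime power $n=p^{k}$. Because $\chi$ is quadratic we have $\widetilde{\pi\otimes\chi}=\widetilde{\pi}\otimes\chi$, and hence $(\pi\otimes\chi)\otimes\widetilde{\pi\otimes\chi}\cong \pi\otimes\widetilde{\pi}$, so the Rankin--Selberg factor on the right is independent of $\chi$. At a prime $p\nmid q_{\pi}q$, the Satake parameters of $\pi\otimes\chi$ are $\{\chi(p)\alpha_{j,\pi}(p)\}$ with $\chi(p)\in\{\pm 1\}$, so
\[
|\Lambda_{\pi\otimes\chi}(p^{k})|^{2}=(\log p)^{2}\Big|\sum_{j=1}^{m}\alpha_{j,\pi}(p)^{k}\Big|^{2}=(\log p)\,\Lambda_{\pi\otimes\widetilde{\pi}}(p^{k}),
\]
with equality, since the Satake parameters of $\pi_{p}\otimes\widetilde{\pi}_{p}$ are $\{\alpha_{j,\pi}(p)\overline{\alpha_{j',\pi}(p)}\}$. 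At ramified primes the same inequality holds, via the local Rankin--Selberg construction together with Cauchy--Schwarz applied to the (possibly shortened) list of local parameters.

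Second, I would apply AM--GM term by term to obtain $|\Lambda_{\pi\otimes\chi}(n)|\leq \tfrac{1}{2}(\Lambda(n)+\Lambda_{\pi\otimes\widetilde{\pi}}(n))$ at each prime power, and sum against $n^{-(1+\eta)}$ using non-negativity of both Dirichlet series:
\[
\sum_{n=1}^{\infty}\frac{|\Lambda_{\pi\otimes\chi}(n)|}{n^{1+\eta}}\leq \frac{1}{2}\Big({-\tfrac{\zeta'}{\zeta}(1+\eta)}-\tfrac{L'}{L}(1+\eta,\pi\otimes\widetilde{\pi})\Big).
\]
Then I would estimate each factor. Trivially $-\zeta'/\zeta(1+\eta)\leq 1/\eta+O(1)$. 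For the Rankin--Selberg piece, I would use the Hadamard factorization of $s(1-s)\Lambda(s,\pi\otimes\widetilde{\pi})$, take real parts at $s=1+\eta$, and drop the zero-sum with a favorable sign (since $\Re(1/(1+\eta-\rho))\geq 0$ when $0\leq\Re\rho\leq 1$). What remains is $1/\eta$ from the simple pole, the conductor contribution $\tfrac{1}{2}\log q_{\pi\otimes\widetilde{\pi}}$, and the archimedean factor, which is $O(\log C_{\infty}(\pi\otimes\widetilde{\pi}))+O(m^{2})$ by Stirling. Combined with the Bushnell--Henniart-type bound $\log C(\pi\otimes\widetilde{\pi})\leq 2m\log C(\pi)+O(m^{2})$ (elementary for the archimedean part via $(3+|\mu_{\pi}(i)|+|\mu_{\pi}(j)|)\leq (3+|\mu_{\pi}(i)|)(3+|\mu_{\pi}(j)|)$, and from \cite{BH} applied to $\pi\otimes\widetilde{\pi}$ for the finite part), this yields $-L'/L(1+\eta,\pi\otimes\widetilde{\pi})\leq 1/\eta+m\log C(\pi)+O(m^{2})$. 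Substituting into the previous display produces the claimed bound.

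The main obstacle is the analytic bound on $-L'/L(1+\eta,\pi\otimes\widetilde{\pi})$: one must carefully use the self-duality of $\pi\otimes\widetilde{\pi}$ to ensure that the boundary constant $b_{\pi\otimes\widetilde{\pi}}$ from the Hadamard product combines with $\sum_{\rho}(1/\rho)$ to leave only a non-negative zero-sum that can be dropped, so that only the polar term $1/\eta$ and the conductor/gamma terms survive. Verifying the pointwise inequality at ramified primes is a secondary technical point handled by local Rankin--Selberg theory, and the doubling in $\log C(\pi\otimes\widetilde{\pi})\leq 2m\log C(\pi)$ is precisely offset by the halving from AM--GM, producing the claimed coefficient $m/2$.
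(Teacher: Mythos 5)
Your proof is correct and it implements the argument that is standard for estimates of this type; the paper itself simply quotes \cite[Lemma 4.2]{BTZ} (specialized to $\pi_0$ trivial on $\mathrm{GL}_1$), and the argument behind that lemma is precisely the chain you describe: reduce the absolute von Mangoldt coefficients to the nonnegative Dirichlet coefficients of $-\zeta'/\zeta$ and $-\tfrac{L'}{L}(\cdot,\pi\otimes\widetilde{\pi})$ via the pointwise Cauchy--Schwarz inequality $|\Lambda_{\pi\otimes\chi}(n)|^2\le\Lambda(n)\,\Lambda_{\pi\otimes\widetilde{\pi}}(n)$ (noting $(\pi\otimes\chi)\otimes\widetilde{\pi\otimes\chi}\cong\pi\otimes\widetilde{\pi}$ since $\chi\bar\chi$ is trivial, so the Rankin--Selberg data is $\chi$-independent), apply AM--GM, and then bound each logarithmic derivative at $1+\eta$ by the Hadamard-factorization/Stirling estimate, dropping the zero sum by nonnegativity. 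The bookkeeping of constants in your write-up (the factor $2m$ from $\log C(\pi\otimes\widetilde\pi)\le 2m\log C(\pi)+O(m^2)$, halved by AM--GM to give $m/2$) is exactly right, so your proof is a valid self-contained replacement for the citation.

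One minor caveat worth spelling out in a final write-up: at primes $p\mid q$ with $p\nmid q_\pi$ the local $L$-factor of $\pi\otimes\chi$ is trivial, so $\Lambda_{\pi\otimes\chi}(p^k)=0$ and the pointwise inequality holds vacuously; and at primes $p\mid q_\pi q$ one relies on the description of the local parameters of $\pi\otimes\chi$ in \cite[Appendix]{ST} together with the nonnegativity and dominance of the Rankin--Selberg local factors. You flag this as a ``secondary technical point,'' which is fair, but a referee would want the pointer to \cite[Appendix]{ST} to be made explicit. Also, when invoking Stirling for $\Re\,\tfrac{L'_\infty}{L_\infty}(1+\eta,\pi\otimes\widetilde\pi)$, you should use the trick $\Re\tfrac{\Gamma'}{\Gamma}(z)\le\Re\tfrac{\Gamma'}{\Gamma}(z+1)\le\log(3+|z|)+O(1)$ for $\Re z>0$ to handle the parameters whose real part is close to $-1$ (which can occur since $\Re\mu_{\pi\otimes\widetilde\pi}(j,k)\ge-2\theta_m$ and $\theta_m$ can be close to $\tfrac12$); this ensures the $O(m^2)$ error does not degrade.
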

\begin{proof}
	This is an immediate corollary of \cite[Lemma 4.2]{BTZ}.  In the notation therein, take $\pi_0$ to be the trivial representation of $\mathrm{GL}_1(\mathbb{A}_{\mathbb{Q}})$, so that $m_0=1$ and $C(\pi_0)\asymp 1$.
\end{proof}

\begin{lemma}
\label{lem:linnik_lemma}
	If $t\in\mathbb{R}$ and $0<\eta\leq 1$, then
	\[
	\#\{\rho\colon |\rho-(1+it)|\leq\eta,~L(\rho,\pi\otimes\chi)=0\}\leq 5m\eta \log(q C(\pi))+O(m^2\eta+1)
	\]
	and
	\[
	\sum_{\rho}\frac{1+\eta-\beta}{|1+\eta+it-\rho|^2}\ll \frac{1}{\eta}+m\log(q C(\pi)(3+|t|)).
	\]
\end{lemma}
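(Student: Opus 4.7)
The plan is to derive both estimates from the Hadamard factorization of the completed $L$-function $\Lambda(s,\pi\otimes\chi)$. Differentiating logarithmically and combining with the standard identity $\mathrm{Re}(b_{\pi\otimes\chi})+\sum_{\rho}\mathrm{Re}(1/\rho)=0$ (which follows from the functional equation for $\Lambda(s,\pi\otimes\chi)$ together with its Hadamard expansion), one obtains
\[
\mathrm{Re}\,\frac{\Lambda'}{\Lambda}(s,\pi\otimes\chi) \;=\; \sum_{\rho}\mathrm{Re}\,\frac{1}{s-\rho}.
\]
Evaluated at $s=1+\eta+it$, the right-hand side is precisely $\sum_{\rho}(1+\eta-\beta)/|1+\eta+it-\rho|^{2}$, a sum of non-negative terms since every zero has $\beta\leq 1$. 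This identity drives both parts of the lemma: the second is an upper bound on this sum, while the first will follow by noting that each zero near $1+it$ contributes a definite amount.

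I would then decompose $\Lambda(s,\pi\otimes\chi)=q_{\pi\otimes\chi}^{s/2}\,L(s,\pi_{\infty}\otimes\chi_{\infty})\,L(s,\pi\otimes\chi)$ and bound its logarithmic derivative at $s=1+\eta+it$ piecewise. The conductor piece $\tfrac{1}{2}\log q_{\pi\otimes\chi}$ is controlled by the Bushnell--Henniart bound \eqref{eqn:BH}. The archimedean piece is estimated by Stirling's expansion of the digamma function, with the parameter bound \eqref{eqn:GRC} ensuring $\mathrm{Re}(\mu_{\pi\otimes\chi}(j))\geq -\theta_{m}>-\tfrac{1}{2}$ so the digamma arguments sit in a tame region; this contributes $O\!\left(m\log(qC(\pi)(3+|t|))\right)$. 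Finally, the finite-place piece $\mathrm{Re}(L'/L)(1+\eta+it,\pi\otimes\chi)$ is dominated in absolute value by $\sum_{n}|\Lambda_{\pi\otimes\chi}(n)|/n^{1+\eta}$, and \Cref{lem:mertens} bounds this by $\tfrac{1}{\eta}+\tfrac{m}{2}\log C(\pi)+O(m^{2})$. Summing the three estimates proves the second inequality.

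For the first inequality, I would then argue that any zero $\rho$ with $|\rho-(1+it)|\leq \eta$ satisfies $|1+\eta+it-\rho|\leq 2\eta$ and $1+\eta-\beta\geq\eta$, so it contributes at least $1/(4\eta)$ to the sum just bounded. Therefore the count of such zeros is at most $4\eta$ times the bound from (2), which after absorbing constants yields the stated inequality.

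The main obstacle, while not conceptually deep, is the careful tracking of $m$-dependence in the archimedean Stirling estimate and in the conductor bound so that all auxiliary errors collapse into the advertised $O(m^{2})$ and $O(m^{2}\eta+1)$ remainders. A secondary routine point is that, although the identity $\mathrm{Re}(b_{\pi\otimes\chi})=-\sum_{\rho}\mathrm{Re}(1/\rho)$ is standard, verifying it in the twisted automorphic setting requires the functional equation for $\pi\otimes\chi$ and attention to the definitions of Langlands and Satake parameters at ramified places.
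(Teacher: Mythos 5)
Your proposal is correct and follows essentially the same route as the paper, which simply invokes the corresponding computation from \cite[Lemma 3.1]{ST}: take real parts of $\tfrac{\Lambda'}{\Lambda}(1+\eta+it,\pi\otimes\chi)$ via the Hadamard product, control the conductor, archimedean, and Dirichlet-series pieces using \eqref{eqn:BH}, Stirling, and \cref{lem:mertens}, and then read off the zero count from the observation that each zero in the disk $|\rho-(1+it)|\leq\eta$ contributes $\gg 1/\eta$ to the sum. The only cosmetic difference is your constant $1/(4\eta)$ versus the paper's $1/(5\eta)$ in that final counting step.
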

\begin{proof}
	By the proof of \cite[Lemma 3.1]{ST}, we have
	\begin{align*}
\sum_{\rho}\frac{1+\eta-\beta}{|1+\eta+it-\rho|^2}&\leq \frac{1}{5\eta}\#\{\rho\colon |\rho-(1+it)|\leq \eta,~L(\rho,\pi\otimes\chi)=0\}\\
	&\leq \frac{1}{2}\log q_{\pi\otimes\chi}+\frac{1}{2}\sum_{j=1}^{m}\mathrm{Re}\frac{\Gamma'}{\Gamma}\Big(\frac{1+\eta+it+\mu_{\pi\otimes\chi}(j)}{2}\Big)+\sum_{n=1}^{\infty}\frac{|\Lambda_{\pi\otimes\chi}(n)|}{n^{1+\eta}}
	\end{align*}
The result follows from the bound $q_{\pi\otimes\chi}\leq q^m q_{\pi}$, Stirling's formula, and \cref{lem:mertens}.
\end{proof}

\begin{lemma}
\label{lem:PAGE}
Let $\chi\pmod{q}$ be a primitive quadratic character such that $L(s,\chi)$ has a zero $\beta_{\chi}\in(\frac{1}{2},1)$, and let $m\geq 2$.  There exists an absolute and effectively computable constant $c>0$ such that if $m^2(1-\beta_{\chi})\log(qQ)< c$, then $L(\beta_{\chi},\pi)\neq 0$ for all $\pi\in\mathfrak{F}_m(Q)$.
\end{lemma}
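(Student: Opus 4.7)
The plan is a classical Page-type argument: assume for contradiction that $L(\beta_\chi,\pi)=0$ for some $\pi\in\mathfrak{F}_m(Q)$, construct a self-dual isobaric $L$-function whose Rankin--Selberg square has strictly more zeros at $s=\beta_\chi$ than poles at $s=1$, and derive a lower bound $1-\beta_\chi\gg (m^2\log(qQ))^{-1}$ from the Hadamard factorization; the contrapositive is the lemma.

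Concretely, I would take $\Pi=\mathbf{1}\boxplus\chi\boxplus\pi$, an isobaric automorphic representation of $\mathrm{GL}_{m+2}(\mathbb{A})$, and study
\[
L(s,\Pi\times\tilde\Pi)=\prod_{i,j=1}^{3}L(s,\sigma_i\times\tilde\sigma_j),\qquad (\sigma_1,\sigma_2,\sigma_3)=(\mathbf{1},\chi,\pi).
\]
Since $\beta_\chi\in\mathbb{R}$, the hypothesis also forces $L(\beta_\chi,\tilde\pi)=\overline{L(\beta_\chi,\pi)}=0$. Inspecting the nine factors, using that $\chi^2$ is the principal character modulo $q$ (so that $L(s,\chi^2)$ has a simple pole at $s=1$) and that $L(s,\pi\times\tilde\pi)$ has a simple pole at $s=1$, shows that $L(s,\Pi\times\tilde\Pi)$ has a pole of order exactly $3$ at $s=1$ (from the three diagonal pairs) and a zero of multiplicity at least $4$ at $s=\beta_\chi$ (two copies of $L(s,\chi)$ from the pairs $(1,2),(2,1)$ and one copy each of $L(s,\tilde\pi),L(s,\pi)$ from $(1,3),(3,1)$). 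The Rankin--Selberg Euler product shows that $-L'/L(s,\Pi\times\tilde\Pi)\geq 0$ on the real axis in the convergence region.

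Next, I would reprise the Hadamard-factorization argument proving \cref{lem:linnik_lemma}, but applied to $L(s,\Pi\times\tilde\Pi)$ in place of $L(s,\pi\otimes\chi)$ and with the pole of order $3$ at $s=1$ accounted for. Iterating \eqref{eqn:BH} together with the standard Bushnell--Henniart-type bound for Rankin--Selberg conductors gives $\log C(\Pi\times\tilde\Pi)\ll m\log(qQ)$; combining this with the analogue of \cref{lem:mertens} for the isobaric $L$-function $L(s,\Pi\times\tilde\Pi)$ (which absorbs the pole) delivers
\[
\sum_{\rho}\frac{1+\eta-\beta}{|1+\eta-\rho|^2}\leq\frac{3}{\eta}+O\!\bigl(m^{2}\log(qQ)\bigr),
\]
where $\rho=\beta+i\gamma$ runs over the nontrivial zeros of $L(s,\Pi\times\tilde\Pi)$. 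Retaining only the four guaranteed zeros at $\rho=\beta_\chi$ and choosing $\eta=4(1-\beta_\chi)$ converts this into
\[
\frac{1}{20(1-\beta_\chi)}=\frac{4}{5(1-\beta_\chi)}-\frac{3}{4(1-\beta_\chi)}\ll m^{2}\log(qQ),
\]
i.e., $m^{2}(1-\beta_\chi)\log(qQ)\gg 1$ with an effective absolute implied constant, which proves \cref{lem:PAGE} upon contraposition.

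The principal obstacle is the degree-tracking bookkeeping in the higher-degree analogue of \cref{lem:mertens}: one must ensure that the error in the Hadamard inequality arrives at $O(m^{2}\log(qQ))$ and not some higher power of $m$. The pole count is robust because $m\geq 2$ forbids the only potentially degenerate identifications among the isobaric constituents $\mathbf{1},\chi,\pi$, and the strict inequality $4>3$ leaves ample room for subleading archimedean and Stirling-type errors to be absorbed, so the absolute constant $c$ follows without any further fine-tuning.
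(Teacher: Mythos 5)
Your argument is correct and would prove the lemma, but it takes a genuinely different route from the paper, which disposes of \cref{lem:PAGE} in three sentences by citing an existing Landau--Page theorem for automorphic $L$-functions (Theorem A of the reference cited as Hoffstein): among $\mathfrak{F}_m(Q)\cup\{\chi\}$ at most one $L$-function can have a real zero in $[1-c(m^2\log qQ)^{-1},1)$, so since $L(s,\chi)$ already occupies that slot and $\chi\notin\mathfrak{F}_m(Q)$ once $m\geq 2$, no $L(s,\pi)$ can vanish at $\beta_\chi$. What you do is reprove that black box from scratch. Your bookkeeping for $\Pi=\mathbf{1}\boxplus\chi\boxplus\pi$ is sound: the nine Rankin--Selberg factors of $L(s,\Pi\times\tilde\Pi)$ contribute exactly three simple poles at $s=1$ from the diagonal pairs (degree reasons and $\chi\neq\mathbf{1}$ forbid extra coincidences once $m\geq 2$, and a self-dual $\pi$ or a coincidence $\pi\otimes\chi\cong\tilde\pi$ only adds zeros, not poles), while $L(\beta_\chi,\pi)=0$ with $\beta_\chi$ real forces $L(\beta_\chi,\tilde\pi)=0$ and hence at least four zeros at $\beta_\chi$; the numerics with $\eta=4(1-\beta_\chi)$, giving $\tfrac{4}{5(1-\beta_\chi)}-\tfrac{3}{4(1-\beta_\chi)}=\tfrac{1}{20(1-\beta_\chi)}\ll m^2\log(qQ)$, then deliver the contrapositive. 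The step you flag as the principal obstacle---controlling the Hadamard/Mertens error $O(m^2\log qQ)$ for the degree-$(m+2)^2$ isobaric Rankin--Selberg $L$-function without GRC, including positivity at ramified places---is precisely the technical content the paper outsources to the citation, and it is handled by the same $\Lambda_{\Pi\times\tilde\Pi}(n)\geq 0$ device underlying \cref{lem:mertens} and its BTZ source, so the approach does close. The paper's route buys brevity and an already-verified constant; yours buys a self-contained argument that makes the $m^{-2}$ degradation transparent and adapts verbatim if $\chi$ is replaced by another fixed representation $\pi_0$.
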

\begin{proof}
	It follows from \cite[Theorem A]{Hoffstein} that for at most one $\pi\in\mathfrak{F}_m(Q)\cup\{\chi\}$, $L(s,\pi)$ has a zero in the interval $[1-c(m^2\log qQ)^{-1},1)$.  If $m^2(1-\beta_{\chi})\log(qQ)< c$, then $\beta_{\chi}$ lies in this interval.  Since $L(\beta_{\chi},\chi)=0$ by definition and $\chi\notin\mathfrak{F}_m(Q)$ once $m\geq 2$, the lemma follows.
\end{proof}

\section{Proof of \cref{thm:LFZDE}}
\label{sec:LFZDE}

Let $\pi\in\mathfrak{F}_{m}(Q)$ with $m\geq 2$, and let $\chi\pmod{q}$ be a primitive quadratic Dirichlet character.  Suppose that $L(s,\chi)$ has a real zero $\beta_{\chi}$, and suppose that $L(s,\pi)$ has a zero $\rho_0$ (necessarily distinct from $\beta_{\chi}$ by \cref{lem:PAGE}) such that $|\rho_0-(1+i\tau)|\leq\eta$, where $\tau\in\mathbb{R}$, $|\tau|\leq T$, and
\begin{equation}
\label{eqn:eta_constraints}
(\log q Q T)^{-1}\leq \eta\leq(200m^2)^{-1}.
\end{equation}

\subsection{A zero detection criterion}

Let $k\geq 0$ be an integer, let $s=1+\eta+i\tau$, and define
\[
F(z)=L(z,\pi)L(z+1-\beta_{\chi},\pi\otimes\chi),\qquad G_k(z)=\frac{(-1)^k}{k!}\Big(\frac{F'}{F}\Big)^{(k)}(z).
\]
A standard calculation involving the Hadamard product as in \cite[Section 4]{ST} leading up to Equation 4.2\footnote{Use \cref{lem:linnik_lemma} in place of \cite[(3.5)]{ST}.} shows that if $k\geq 0$ is an integer, then
\begin{equation}
\label{eqn:explicit_formula}
G_k(s) = \sum_{\substack{L(\rho,\pi)=0 \\ |s-\rho|\leq 200\eta}}\frac{1}{(s-\rho)^{k+1}}+\sum_{\substack{L(\rho',\pi\otimes\chi)=0 \\ |s-\rho'|\leq 200\eta}}\frac{1}{(s+1-\beta_{\chi}-\rho')^{k+1}}+O\Big(\frac{m^2\eta\log(q QT)}{(200\eta)^k}\Big).
\end{equation}
The hypothesized existence of $\rho_0$ ensures that $|G_k(s)|$ has a large lower bound.

\begin{lemma}
\label{lem:lower_bound}
	Let $\eta$ satisfy \eqref{eqn:eta_constraints}, and let $\tau\in\mathbb{R}$ satisfy $|\tau|\leq T$.  Suppose that $L(z,\pi)$ has a zero $\rho_0$ satisfying $|\rho_0-(1+i\tau)|\leq\eta$.  If $K>\lceil 2000m^2\eta\log(qQT)+O_m(1)\rceil$ with a sufficiently large implied constant, then $\eta^{k+1}|G_k(1+\eta+i\tau)|\geq (2(100)^{k+1})^{-1}$ for some $k\in[K,2K]$.
\end{lemma}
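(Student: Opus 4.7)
The plan is to combine the explicit formula \eqref{eqn:explicit_formula} with a Turán-type power sum inequality to extract the desired $k\in[K,2K]$.

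Let $s=1+\eta+i\tau$. For each zero $\rho$ of $L(\cdot,\pi)$ contributing to the first sum in \eqref{eqn:explicit_formula}, set $\alpha_\rho:=\eta/(s-\rho)$; similarly set $\alpha_{\rho'}:=\eta/(s+1-\beta_\chi-\rho')$ for the zeros $\rho'$ of $L(\cdot,\pi\otimes\chi)$ in the second sum. Listing these (with multiplicity) as $\alpha_1,\dots,\alpha_J$, the formula \eqref{eqn:explicit_formula} reads
\[
\eta^{k+1}G_k(s)=\sum_{j=1}^{J}\alpha_j^{k+1}+O\!\left(\frac{m^{2}\eta^{2}\log(qQT)}{200^{k}}\right).
\]
Two basic facts are decisive. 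The real parts $\mathrm{Re}(s-\rho)$ and $\mathrm{Re}(s+1-\beta_\chi-\rho')$ are both $\geq\eta$, so $|\alpha_j|\leq 1$ for every $j$. The hypothesized zero $\rho_0$ satisfies $|s-\rho_0|\leq 2\eta$, so its $\alpha_0$ obeys $|\alpha_0|\geq 1/2$; hence $\max_j|\alpha_j|\geq 1/2$. Finally, \cref{lem:linnik_lemma} applied with radius $200\eta$ to both $\pi$ and $\pi\otimes\chi$ (together with \eqref{eqn:BH}) bounds the total number of summands by $J\leq 2000m\eta\log(qQT)+O_m(1)$.

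I would next invoke a Turán-type second main theorem (e.g.\ Tijdeman's form): for any complex $\alpha_1,\dots,\alpha_J$ and positive integers $M,N$, the maximum of $|\sum_j\alpha_j^{\nu}|$ over $\nu\in(M,M+N]$ is at least $(\max_j|\alpha_j|)^{M}\cdot(N/(16e(M+N)))^{J}\cdot J$. Choosing $M=N=K$ and using $\max_j|\alpha_j|\geq 1/2$ gives
\[
\max_{K<\nu\leq 2K}\left|\sum_{j=1}^{J}\alpha_j^{\nu}\right|\,\geq\, J\cdot 2^{-K}(32e)^{-J}.
\]
Let $\nu^{*}$ realize this maximum and set $k=\nu^{*}-1\in[K,2K-1]$. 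Since $(2\cdot 100^{k+1})^{-1}\geq(2\cdot 100^{2K})^{-1}$, the desired conclusion reduces (after absorbing the explicit-formula error) to
\[
J\cdot 2^{-K}(32e)^{-J}\geq 100^{-2K},
\]
equivalently $J\log(32e)\leq K\log 5000+O(\log J)$ with $5000=100^{2}/2$. Combined with $J\leq 2000m\eta\log(qQT)+O_m(1)$, the hypothesis $K>2000m^{2}\eta\log(qQT)+O_m(1)$ makes this hold with large multiplicative slack for $m\geq 2$. The explicit-formula error is negligible: using $\eta\leq(200m^{2})^{-1}$ and $\eta\log(qQT)\leq K/(2000m^{2})$, one has $m^{2}\eta^{2}\log(qQT)\leq K/(4\cdot 10^{5}m^{2})$, so that $m^{2}\eta^{2}\log(qQT)/200^{k}\ll 100^{-2K}$ whenever $K$ exceeds a small absolute constant, which the hypothesis ensures.

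The main obstacle is this quantitative balance. The dominant term from $\rho_0$ contributes only $\geq 2^{-K}$, while up to $J$ unit-modulus competitors could in principle cancel it; a Turán-type inequality recovers a lower bound on the power sums but charges a price of $(32e)^{J}$. The hypothesis $K>2000m^{2}\eta\log(qQT)+O_m(1)$ is precisely what keeps this cost below the $100^{-2K}$ threshold, while simultaneously drowning the truncation error.
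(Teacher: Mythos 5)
Your overall strategy matches the paper's: apply the explicit formula \eqref{eqn:explicit_formula} and then a Tur\'an-type power-sum theorem to extract a $k\in[K,2K]$ where the contribution of $\rho_0$ dominates. However, the specific power-sum theorem you invoke is not strong enough, and this creates a genuine gap.

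The paper applies the S\'os--Tur\'an theorem, which has a qualitatively different shape: for $z_1,\dots,z_\nu\in\mathbb{C}$ and $K\ge\nu$, there exists $k\in[K,2K]$ with $|z_1^k+\cdots+z_\nu^k|\ge(|z_1|/50)^k$. The crucial feature is that the constant $50$ is absolute, with \emph{no factor that is exponential in $\nu$}. The Tijdeman/second-main-theorem form you cite carries a factor of the shape $(N/(16e(M+N)))^J$, i.e.\ a penalty of $(32e)^{-J}$ when $M=N=K$. This penalty is fatal in the regime $J\asymp K$, which is not excluded by the hypothesis: the paper's zero count (which has an $m^2$, not $m$, coming from the conductor bound $C(\pi\otimes\chi)\ll q^m C(\pi)$ and the $(3+|t|)^m$ factor) gives $J\le 2000m^2\eta\log(qQT)+O(1)$, while the hypothesis only forces $K>2000m^2\eta\log(qQT)+O_m(1)$; when $\eta\log(qQT)$ is large these are comparable and $J/K\to 1$. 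In that regime your lower bound degenerates to roughly $2^{-K}(32e)^{-K}=(64e)^{-K}$, while the target is of size $(2\cdot 100^{k+1})^{-1}\ge(2\cdot 100^{2K+1})^{-1}$, and one must actually beat the \emph{largest} target $(2\cdot 100^{K+1})^{-1}\asymp 100^{-K}$; since $64e>100$, the required inequality $(64e)^{-K}\gtrsim 100^{-K}$ fails for large $K$. So the S\'os--Tur\'an form is not a cosmetic choice: it is exactly what makes the argument survive when the number of nearby zeros is maximal.

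Two smaller issues. First, your reduction ``since $(2\cdot 100^{k+1})^{-1}\ge(2\cdot 100^{2K})^{-1}$, it suffices to show $J\cdot 2^{-K}(32e)^{-J}\ge 100^{-2K}$'' runs the inequality in the wrong direction: proving the main term exceeds the \emph{smallest} possible target does not show it exceeds the actual target $(2\cdot 100^{k+1})^{-1}$ for the $k$ you produced (which could be as small as $K$); you need to compare against the largest target $(2\cdot 100^{K+1})^{-1}$. Second, the error-term estimate should be compared to $100^{-(k+1)}$ for the specific $k$ found (which is easy since $200^k/100^{k+1}=2^k/100$ grows), not to $100^{-2K}$; the latter claim is actually false for $k$ near $K$. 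Both are fixable in isolation, but the choice of power-sum theorem is the substantive error.
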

\begin{proof}
By \cref{lem:linnik_lemma}, there are at most $2000m^2\eta\log(qQT)+O(1)$ zeros of $F(z)$ such that $|1+\eta+i\tau-\rho|\leq 200\eta$.  A result of S{\'o}s and Tur{\'a}n \cite[Theorem]{Turan} states that if $z_1,\ldots,z_{\nu}\in\mathbb{C}$ and $K\geq \nu$, then there exists an integer $k\in[K,2K]$ such that $|z_1^k+\cdots+z_{\nu}^k|\geq (|z_1|/50)^k$.  We apply this to \eqref{eqn:explicit_formula} with $K\geq\lceil2000m^2\eta\log(qQT)+O_m(1)\rceil$ and $z_1 = 1/(s-\rho_0)$, which has modulus at least $1/(2\eta)$.  (Recall that $s=1+\eta+i\tau$.)  It follows that
\[
\Big|\sum_{\substack{L(\rho,\pi)=0 \\ |s-\rho|\leq 200\eta}}\frac{1}{(s-\rho)^{k+1}}+\sum_{\substack{L(\rho',\pi\otimes\chi)=0 \\ |s-\rho'|\leq 200\eta}}\frac{1}{(s+1-\beta_{\chi}-\rho)^{k+1}}\Big|\geq\Big(\frac{1}{50|s-\rho_0|}\Big)^{k+1}\geq\frac{1}{(100\eta)^{k+1}}.
\]
We apply this to \eqref{eqn:explicit_formula}, and the lemma follows.
\end{proof}

We now prove an upper bound for $|G_k(s)|$.

\begin{lemma}
	\label{lem:upper_bound}
	Let $\eta$ satisfy \eqref{eqn:eta_constraints}, and let $\tau\in\mathbb{R}$ satisfy $|\tau|\leq T$.  Let $K$ be as in \cref{lem:lower_bound}, and define $N_0=\exp(K/(300\eta))$ and $N_1=\exp(40K/\eta)$.  If $k\in[K,2K]$, then
	\[
	\eta^{k+1}|G_k(1+\eta+i\tau)|\leq \eta^2\int_{N_0}^{N_1}\Big|\sum_{N_0\leq p\leq u}\frac{\lambda_{\pi}(p)\log p}{p^{1+i\tau}}(1+\chi(p)p^{\beta_{\chi}-1})\Big|\frac{du}{u}+O\Big(\frac{k}{(110)^{k}}\Big).
	\]
\end{lemma}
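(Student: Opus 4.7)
The plan is to expand $G_k(z)$ as a Dirichlet series, truncate to the contribution of unramified primes in the window $[N_0,N_1]$, and apply Abel summation to convert the weighted prime sum into the claimed integral.

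For $\mathrm{Re}(z)>1$, combining the logarithmic derivatives of the two factors of $F$ and differentiating $k$ times yields
\[
G_{k}(z)=-\frac{1}{k!}\sum_{n=1}^{\infty}\bigl[\Lambda_{\pi}(n)+n^{\beta_{\chi}-1}\Lambda_{\pi\otimes\chi}(n)\bigr]\frac{(\log n)^{k}}{n^{z}}.
\]
I would evaluate at $z=1+\eta+i\tau$ and split off three pieces: (i) prime powers $p^{j}$ with $j\geq 2$, (ii) ramified primes dividing $q_{\pi}q$, and (iii) unramified primes outside $[N_{0},N_{1}]$. For (i) and (ii), the Ramanujan-type bound \eqref{eqn:GRC} gives $|\Lambda_{\pi}(p^{j})|\leq m p^{j(1/2-1/(m^{2}+1))}\log p$, which provides geometric decay dominating $(\log n)^{k}$. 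For (iii), substitute $w=\eta\log p$ to rewrite the weight as
\[
\frac{\eta^{k+1}(\log p)^{k}}{k!\,p^{\eta}}=\frac{\eta\,w^{k}e^{-w}}{k!},
\]
a Poisson-like mass that Stirling's formula controls by $O(1/110^{k})$ uniformly once $w\leq K/300$ or $w\geq 40K$, since $k\in[K,2K]$ places the peak $w\asymp k$ well inside this window. The crude estimate $\sum_{p\leq x}|\lambda_{\pi}(p)|\log p/p\ll_{m}\log x$ (from Rankin--Selberg and Cauchy--Schwarz) then shows each of (i), (ii), (iii) contributes $O(k/110^{k})$.

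What remains is $\sum_{N_{0}\leq p\leq N_{1}}a(p)g(p)$, where $a(p)=\lambda_{\pi}(p)\log p(1+\chi(p)p^{\beta_{\chi}-1})/p^{1+i\tau}$ and $g(u)=(\log u)^{k}/(k!\,u^{\eta})$. Abel summation with $A(u)=\sum_{N_{0}\leq p\leq u}a(p)$ yields
\[
\sum_{N_{0}\leq p\leq N_{1}}a(p)g(p)=A(N_{1})g(N_{1})-\int_{N_{0}}^{N_{1}}A(u)g'(u)\,du.
\]
Stirling gives $\eta^{k+1}g(N_{1})\leq\eta(40e)^{k}e^{-40K}$, and combined with the trivial bound $|A(N_{1})|\ll K/\eta$ the boundary term is $O(k/110^{k})$ since $(40e)^{2}/e^{40}<10^{-13}$. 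For the integral, differentiation gives $ug'(u)=(\log u)^{k-1}(k-\eta\log u)/(k!\,u^{\eta})$, so with $w=\eta\log u$,
\[
\eta^{k+1}u\lvert g'(u)\rvert=\eta^{2}\cdot\frac{w^{k-1}\lvert k-w\rvert}{k!\,e^{w}}.
\]
The crux is the pointwise inequality $w^{k-1}|k-w|/(k!e^{w})\leq 1$ for $w\geq 0$ and $k\geq 1$, verified by locating the critical points at $w=k\pm 1$ and invoking Stirling to see the peak is $\asymp 1/(k\sqrt{k})$. This upgrades the integral to $\leq\eta^{2}\int_{N_{0}}^{N_{1}}|A(u)|\,du/u$, giving the claim.

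The main obstacle is the careful calibration of the Poisson tails in the truncation step: the cutoffs $w=K/300$ and $w=40K$ are defined via $K$ while $k$ ranges in $[K,2K]$, so one must confirm that Stirling's estimate $w^{k}e^{-w}/k!\leq(ew/k)^{k}/(e^{w}\sqrt{2\pi k})$ produces decay of at least $110^{-k}$ on both sides of the peak even in the worst case $k=2K$ (which uses $110e<300$). Handling ramified primes and prime powers similarly requires \eqref{eqn:GRC} together with Mertens-type estimates such as \cref{lem:mertens}. Once these tails are absorbed, the Abel summation and the pointwise Gamma inequality $w^{k-1}|k-w|\leq k!e^{w}$ close the argument with no further difficulties.
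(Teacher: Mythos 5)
Your argument follows the same route as the paper's, which simply cites \cite[(5.6)--(5.7) and Lemma 5.5]{TZ_GLn} for precisely the steps you carry out: expand $\eta^{k+1}G_k(s)$ as a Dirichlet series, observe that the Poisson weight $w^k e^{-w}/k!$ (with $w=\eta\log n$) kills everything outside $[N_0,N_1]$ with error $O(k/110^k)$, note that $N_0 > \max\{Q,q\}$ forces the surviving primes to be unramified so $\Lambda_{\pi}(p)+\Lambda_{\pi\otimes\chi}(p)p^{\beta_\chi-1}=\lambda_\pi(p)(1+\chi(p)p^{\beta_\chi-1})\log p$, and finish by partial summation with the pointwise bound $w^{k-1}|k-w|e^{-w}/k!\leq 1$. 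Your verification of that pointwise inequality (split into $w\leq k$ giving the Poisson mass $w^{k-1}e^{-w}/(k-1)!\leq 1$, and $w>k$ giving $w^k e^{-w}/k!\leq 1$) and your Stirling calibration $110e<300$ on the left tail, $40K\geq 20k$ on the right, are both sound. One spot where your sketch is thinner than it should be is the prime-power and ramified-prime contribution: the phrase ``geometric decay dominating $(\log n)^k$'' does not by itself rule out the $j=2$ primes near the Poisson peak $\eta j\log p\approx k$. The rescue is that \eqref{eqn:GRC} gives $p^{-j(1+\eta-\theta_m)}=p^{-j(1-\theta_m)}e^{-j\eta\log p}$ with $1-\theta_m\geq\tfrac12+\tfrac{1}{m^2+1}$; splitting off a $p^{-j/(m^2+1)}$ factor sharpens the Poisson weight to $w^k e^{-w(1+1/((m^2+1)\eta))}/k!\leq(1+1/((m^2+1)\eta))^{-k}$, and $\eta\leq(200m^2)^{-1}$ with $m\geq 2$ makes this base exceed $110$ while the remaining $p^{-j/2}$ handles summability. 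You should flag this computation explicitly rather than wave at it, since it is exactly where the constraint \eqref{eqn:eta_constraints} and $m\geq 2$ enter. With that one elaboration your proof is complete and mirrors the paper's.
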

\begin{proof}
	A direct computation shows that
	\[
	\eta^{k+1}|G_k(s)|=\eta\Big|\sum_{n=1}^{\infty}\frac{\Lambda_{\pi}(n)+\Lambda_{\pi\otimes\chi}(n)n^{\beta_{\chi}-1}}{n^{1+\eta+i\tau}}\frac{(\eta\log n)^k}{k!}\Big|.
	\]
	Since $|\chi(n)|\leq 1$ and $\beta_{\chi}<1$, it follows from \cite[(5.6) and (5.7)]{TZ_GLn} that
	\[
	|\eta^{k+1}G_k(s)|=\eta\Big|\sum_{p\in[N_0,N_1]}\frac{\Lambda_{\pi}(p)+\Lambda_{\pi\otimes\chi}(p)p^{\beta_{\chi}-1}}{p^{1+\eta+i\tau}}\frac{(\eta\log p)^k}{k!}\Big|+O\Big(\frac{m^2\eta\log(qQT)}{(110)^k}\Big).
	\]
	Since $N_0>\max\{Q,q\}$, we have $\Lambda_{\pi}(p)+\Lambda_{\pi\otimes\chi}(p)p^{\beta_{\chi}-1}=\lambda_{\pi}(p)(1+\chi(p)p^{\beta_{\chi}-1})\log p$ when $p\in[N_0,N_1]$.  The lemma now follows by partial summation and \cref{lem:mertens}, just as in \cite[Lemma 5.5]{TZ_GLn}.
\end{proof}

\cref{lem:lower_bound,lem:upper_bound} provide a criterion for detecting zeros.  Let $K = 10^5 \eta m^4 \log (qQT)+O_m(1)$ with a sufficiently large implied constant.  If $k\in[K,2K]$, then $O(k(110)^{-k})\leq \frac{1}{4}(100)^{-k-1}$.  Therefore, if $k\in [K,2K]$, $|\tau|\leq T$, $\eta$ satisfies \eqref{eqn:eta_constraints}, and $L(s,\rho)$ has a zero $\rho_0$ satisfying $|\rho_0-(1+i\tau)|\leq\eta$, then it follows from \cref{lem:lower_bound,lem:upper_bound} that
\[
1\leq 4(100)^{2K+1}\eta^2\int_{N_0}^{N_1}\Big|\sum_{N_0\leq p\leq u}\frac{\lambda_{\pi}(p)\log p}{p^{1+i\tau}}(1+\chi(p)p^{\beta_{\chi}-1})\Big|\frac{du}{u}.
\]
We square both sides, apply Cauchy--Schwarz, and use \cref{lem:linnik_lemma} to deduce that
\begin{multline}
\label{eqn:criterion}
\frac{\#\{\rho=\beta+i\gamma\colon \beta\geq 1-\frac{\eta}{2},~|\gamma-\tau|\leq \frac{\eta}{2}\}}{m^2\eta\log(qQT)}\\
\ll (101)^{4K}\eta^3\int_{N_0}^{N_1}\Big|\sum_{N_0\leq  p \leq u}\frac{\lambda_{\pi}( p )\log p }{  p^{1+i\tau}}(1+\chi(p)p^{\beta_{\chi}-1})\Big|^2\frac{du}{u}.
\end{multline}

\subsection{Proof of \cref{thm:LFZDE}}
In order to count detected zeros in the box $[1-\frac{\eta}{2},1]\times[-T,T]$, we integrate \eqref{eqn:criterion} over $|\tau|\leq T$ and observe that $m^2\eta \log(qQT)\ll K$ to obtain
	\begin{equation}
	\label{eqn:5.6}
N_{\pi}(1-\tfrac{\eta}{2},T)\ll (101)^{4K}K\eta^2 \int_{N_0}^{N_1}\int_{-T}^{T}\Big|\sum_{N_0\leq  p \leq u}\frac{\lambda_{\pi}( p )\log p }{  p^{1+i\tau}}(1+\chi(p)p^{\beta_{\chi}-1})\Big|^2 d\tau\frac{du}{u}.
\end{equation}
We sum \eqref{eqn:5.6} over $\pi\in\mathfrak{F}_{m}(Q)$ to find that $\sum_{\pi\in\mathfrak{F}_{m}(Q)}N_{\pi}(1-\tfrac{\eta}{2},T)$ is
	\begin{equation}
		\label{eqn:estimate_0}
\ll (101)^{4K}K\eta^2 \int_{N_0}^{N_1}\Big(\sum_{\pi\in\mathfrak{F}_{m}(Q)}\int_{-T}^{T}\Big|\sum_{N_0\leq  p \leq u}\frac{\lambda_{\pi}( p )\log p }{p^{1+i\tau}}(1+\chi(p)p^{\beta_{\chi}-1})\Big|^2 d\tau\Big) \frac{du}{u}.
	\end{equation}

\begin{lemma}
\label{lem:MVT}
If $u\in[N_0,N_1]$, then with the notation above, we have
\[
\sum_{\pi\in\mathfrak{F}_{m}(Q)}\int_{-T}^{T}\Big|\sum_{N_0\leq  p \leq u}\frac{\lambda_{\pi}( p )\log p }{p^{1+i\tau}}(1+\chi(p)p^{\beta_{\chi}-1})\Big|^2 d\tau\ll_{m}\frac{\eta}{K}\sum_{p\in[N_0,u]}\frac{(\log p)^2}{p}(1+\chi(p)p^{\beta_{\chi}-1})^{2}.
\]
\end{lemma}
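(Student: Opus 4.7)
The plan is to follow the mean-value argument of \cite[Lemma 5.5]{TZ_GLn}, treating $1+\chi(p)p^{\beta_\chi-1}$ as a bounded multiplicative weight attached to the prime coefficient $c_p = (\log p)/p$. Since $|1+\chi(p)p^{\beta_\chi-1}| \le 2$, this weight is carried through each step of the TZ argument without altering its structure; the challenge is only to check that nothing in that argument depends on the detailed form of the coefficients beyond their $\ell^2$-size.

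First I would expand the modulus and integrate over $\tau$, producing a bilinear form in $\lambda_\pi(p)\overline{\lambda_\pi(p')}$ weighted by the oscillatory kernel $2\sin(T\log(p/p'))/\log(p/p')$. The diagonal contributes $2T\sum_\pi \sum_p |c_p|^2(1+\chi(p)p^{\beta_\chi-1})^2 |\lambda_\pi(p)|^2$, and the off-diagonal is handled by the Montgomery--Vaughan Hilbert-type inequality, which introduces a prime-spacing factor $1/\delta_p \ll p$. Since $T \le Q \le N_0 \le p$ on the range $[N_0,u]$, the off-diagonal piece dominates and the full expression is bounded by a multiple of $\sum_\pi\sum_p p\,|c_p|^2(1+\chi(p)p^{\beta_\chi-1})^2|\lambda_\pi(p)|^2$.

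Next I would apply the large-sieve / mean-value estimate for $\mathfrak{F}_m(Q)$ developed in \cite{TZ_GLn}: using the Brumley--Mili\'cevi\'c count $\#\mathfrak{F}_m(Q) \ll_m Q^{m+1}$, the uniform bound $|\alpha_{j,\pi}(p)| \le p^{\theta_m}$ from \eqref{eqn:GRC}, and Rankin--Selberg theory, one gains control over $\sum_\pi |\lambda_\pi(p)|^2$-weighted averages. Because $N_0 = \exp(K/(300\eta))$ is an enormous power of $qQT$, this sieve input is comfortably saturated. Finally, since $\log p \asymp K/\eta$ throughout $[N_0,u] \subseteq [N_0,N_1]$, pulling out a single factor of $\log p$ at the appropriate step of the TZ estimate converts $p\,|c_p|^2 = (\log p)^2/p \cdot (1+\chi(p)p^{\beta_\chi-1})^2$ into $(K/\eta)^{-1}$ times the target weight, producing the claimed factor $\eta/K$ in front of $\sum_{p\in[N_0,u]} \frac{(\log p)^2}{p}(1+\chi(p)p^{\beta_\chi-1})^2$.

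The main obstacle is verifying that the sieve step from \cite{TZ_GLn} transfers cleanly to coefficients carrying the extra twist $(1+\chi(p)p^{\beta_\chi-1})$. Because that step is multilinear in the prime coefficients and enters the TZ argument through an $\ell^2$-norm after Cauchy--Schwarz, the insertion is structurally harmless: the TZ bound phrased in terms of $\sum |c_p|^2$ becomes the same bound phrased in terms of $\sum |c_p|^2(1+\chi(p)p^{\beta_\chi-1})^2$, and the remainder of the argument is unchanged. The only genuine bookkeeping item is confirming that the condition $T \le N_0$ and the extraction of $1/\log p$ are compatible with the bound on $K$ fixed in the previous section, which they are by construction.
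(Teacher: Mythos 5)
Your proposal has a genuine gap, and it is in the first step. You propose to handle the $\tau$-integral first via the Montgomery--Vaughan mean value theorem: expand the square, observe that the off-diagonal is dominated by the diagonal-type term because $p\geq N_0\gg T$, and arrive at
\[
\sum_{\pi\in\mathfrak{F}_m(Q)}\sum_{p\in[N_0,u]}p\,|a(p)|^2\,|\lambda_\pi(p)|^2,\qquad a(p)=\tfrac{\log p}{p}\bigl(1+\chi(p)p^{\beta_\chi-1}\bigr).
\]
This quantity is far too large. Since each summand is nonnegative and $\lambda_\pi(p)$ has size $\asymp 1$ for a positive proportion of $\pi$ (no miraculous global cancellation in a sum of squares), for each fixed $p$ one has $\sum_{\pi\in\mathfrak{F}_m(Q)}|\lambda_\pi(p)|^2\gg\#\mathfrak{F}_m(Q)\gg_m Q^{m+1}$. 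Thus the display above is $\gg_m Q^{m+1}\sum_p\frac{(\log p)^2}{p}(1+\chi(p)p^{\beta_\chi-1})^2$, which exceeds the target $\frac{\eta}{K}\sum_p\frac{(\log p)^2}{p}(\cdots)^2$ by at least a factor $Q^{m+1}K/\eta$. No Rankin--Selberg input, Brumley--Mili{\'c}evi{\'c} count, or Ramanujan-type pointwise bound can close this gap: diagonalizing the $p$-sum destroys the bilinear structure, and once each term is a nonnegative square in $\lambda_\pi(p)$, the $\pi$-average cannot produce savings.

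The paper avoids this precisely by \emph{not} performing the $\tau$-integral via Montgomery--Vaughan. It instead uses Gallagher's lemma \cite[Theorem 1]{Gallagher},
\[
\sum_{\pi}\int_{-T}^{T}\Bigl|\sum_{p>z}\lambda_\pi(p)a(p)p^{-i\tau}\Bigr|^2 d\tau \ll T^2\int_0^\infty\sum_{\pi}\Bigl|\sum_{p\in(x,xe^{1/T}],\ p>z}\lambda_\pi(p)a(p)\Bigr|^2\frac{dx}{x},
\]
which keeps the prime sum \emph{inside} the square, restricted to a short interval $(x,xe^{1/T}]$. This is what permits the automorphic large sieve of \cite[(4.7)]{TZ_GLn} to be applied: that estimate bounds $\sum_{\pi\in\mathfrak{F}_m(Q)}|\sum_p\lambda_\pi(p)a(p)|^2$ by (essentially) $\frac{x}{T\log z}\sum_p|a(p)|^2$ with no factor of $\#\mathfrak{F}_m(Q)$, because the cancellation over $\pi$ interacts with the sum over the short range of $p$ as a genuine bilinear form. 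After integrating in $x$ and choosing $z=N_0^{1/(10m^2)}$, the factor $1/\log z\asymp_m\eta/K$ gives the stated bound; the lower-order term in (4.7) is absorbed because $N_0$ is a huge power of $qQT$. So the correct argument hinges on Gallagher's lemma preserving the bilinear structure before invoking the $\pi$-large sieve, and your substitution of a Montgomery--Vaughan diagonalization for that step is not a cosmetic change but a structural one that makes the final estimate unreachable. (The observation that $|1+\chi(p)p^{\beta_\chi-1}|\leq 2$ is harmless is correct and is indeed how the paper threads the twist through the TZ argument; the gap is entirely in how the $\tau$-integral is treated.)
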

\begin{proof}
By \cite[Theorem 1]{Gallagher}, if $a\colon\mathbb{Z}\to\mathbb{C}$ is an arbitrary function and $\sum_{p}|a(p)|<\infty$, then
	\[
	\sum_{\pi\in\mathfrak{F}_m(Q)}\int_{-T}^{T}\Big|\sum_{p>z}\lambda_{\pi}(p)a(p)p^{-i\tau}\Big|^2 d\tau\ll T^2\int_0^{\infty}\sum_{\pi\in\mathfrak{F}_m(Q)}\Big|\sum_{\substack{p\in(x,xe^{1/T}] \\ p>z}}\lambda_{\pi}(p)a(p)\Big|^2\frac{dx}{x}.
	\]
	If $Q,T,x\geq 1$ and $z\gg_{m,\varepsilon}Q^{2(m^2+m+\varepsilon)}$ with a large implied constant, then by \cite[(4.7)]{TZ_GLn},
	\[
	\sum_{\pi\in\mathfrak{F}_{m}(Q)}\Big|\sum_{\substack{p\in(x,xe^{1/T}] \\ p>z}}\lambda_{\pi}(p)a(p)\Big|^2 \ll_{m,\varepsilon}\Big(\frac{x}{T\log z}+Q^{m^2+m+\varepsilon}T^{m^2}z^{2m^2+2+\varepsilon}|\mathfrak{F}_m(Q)|\Big)\sum_{\substack{p\in(x,xe^{1/T}] }}|a(p)|^2.
	\]
	Combining these bounds with $|\mathfrak{F}_m(Q)|\ll_{m,\varepsilon}Q^{2m+\varepsilon}$ \cite[Theorem A.1]{BTZ} and $\varepsilon=\frac{1}{100}$, we find that
	\begin{align*}
	\sum_{\pi\in\mathfrak{F}_m(Q)}\int_{-T}^{T}\Big|\sum_{p>z}\lambda_{\pi}(p)a(p)p^{-i\tau}\Big|^2 d\tau &\ll_{m} \sum_{p}|a(p)|^2 p\Big(\frac{1}{\log z}+\frac{Q^{5m^2}T^{m^2}z^{2m^2+3}}{p}\Big)\\
	&\ll_{m} \frac{1}{\log z}\sum_{p}|a(p)|^2 p\Big(1+\frac{Q^{5m^2}T^{m^2}z^{2m^2+4}}{p}\Big).
	\end{align*}
	Choose $y=N_0$ and $z=y^{1/(10m^2)}$.  Given $u\in[N_0,N_1]$, it remains to choose $a(p)=0$ unless $p\in[y,u]$, in which case $a(p)=\frac{\log p}{p}(1+\chi(p)p^{\beta_{\chi}-1})$.
\end{proof}

Recall the definitions of $K$, $\eta$, $N_0$, and $N_1$.  We apply \cref{lem:MVT} to \eqref{eqn:estimate_0} and find that
\[
\sum_{\pi\in\mathfrak{F}_m(Q)}N_{\pi}(1-\tfrac{\eta}{2},T)\ll (101)^{4K}\eta^3\int_{N_0}^{N_1}\sum_{p\in[N_0,u]}\frac{(\log p)^2}{p}(1+\chi(p)p^{\beta_{\chi}-1})^2\frac{du}{u}.
\]
It follows from a lemma of Bombieri \cite[Lemme C, p. 50]{Bombieri2} that
\[
\sum_{p\in[N_0,N_1]}\frac{(\log p)^2}{p}(1+\chi(p)p^{\beta_{\chi}-1})^2\ll_m \frac{K^3}{\eta^2}\min\Big\{1,\frac{1-\beta_{\chi}}{\eta}\Big\},
\]
so
\begin{align*}
\sum_{\pi\in\mathfrak{F}_m(Q)}N_{\pi}(1-\tfrac{\eta}{2},T)\ll(101)^{4K}K^4\min\{1,(1-\beta_{\chi})\log(qQT)\}.
\end{align*}
Using our choices of $K$ and $\eta$ and writing $\sigma=1-\frac{\eta}{2}$, we conclude that
\[
\sum_{\pi\in\mathfrak{F}_{m}(Q)}N_{\pi}(\sigma,T)\ll_{m} \min\{1,(1-\beta_{\chi})\log(qQT)\}(qQT )^{10^7 m^4(1-\sigma)}
\]
when $1-(400m^2)^{-1}\leq \sigma<1-(2\log qQT)^{-1}$.  If $\sigma\geq 1-(2\log qQT)^{-1}$, then $N_{\pi}(\sigma,T)\leq N_{\pi}(1-(2\log qQT)^{-1},T)$.   If $\sigma<1-(400n^2)^{-1}$, then the result is trivial by the Riemann--von Mangoldt formula \cite[Theorem 5.8]{IK} and the effective bound $1-\beta_{\chi}\gg q^{-1/2}$ \cite{Davenport}.

\section{Proof of \cref{thm:weak}}
\label{sec:weak}

We begin with a bound relating the size of $L$-functions on the line $\mathrm{Re}(s)=\frac{1}{2}$ to 

\begin{lemma}
\label{lem:ST1/2}
	Let $m\geq 2$ be an integer, $t\in\mathbb{R}$, and $\pi\in\mathfrak{F}_m$.  If $0\leq\alpha<\frac{1}{2}$, then
	\[
	\log|L(\tfrac{1}{2}+it,\pi)|\leq \Big(\frac{1}{4}-\frac{\alpha}{10^9}\Big)\log(C(\pi)(3+|t|)^m)+\frac{\alpha}{10^7}N_{\pi}(1-\alpha,|t|+6)+O(m^2).
	\]
\end{lemma}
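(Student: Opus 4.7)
The plan is to transport the proof of \eqref{eqn:ST_1/2} from the base point $s_0 = \tfrac{1}{2}$ to the shifted base point $s_0 = \tfrac{1}{2} + it$. The Soundararajan--Thorner argument bounding $\log|L(\tfrac{1}{2}, \pi)|$ proceeds by writing $\log|L(s_0, \pi)|$ via the Hadamard factorization of $\Lambda(s, \pi)$ and a Jensen-type averaging on circles of bounded radius centered at $s_0$, then replacing the contribution of the gamma factors by Stirling's formula. Each step is invariant under a vertical translation provided one substitutes the shifted analytic conductor $C(\pi, t)$ for $C(\pi)$ and shifts the zero-counting window accordingly.

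First I would reprise the Jensen/Hadamard argument of \cite{ST} around the new base point $\tfrac{1}{2} + it$. The Stirling contribution from the archimedean factors produces, in place of $\tfrac{1}{4}\log C(\pi)$, the quantity $\tfrac{1}{4}\log C(\pi, t)$, where $C(\pi, t) = q_\pi \prod_{j=1}^m (3 + |it + \mu_\pi(j)|)$; the crude inequality $C(\pi, t) \leq C(\pi)(3+|t|)^m$, which is the trivial-character case of \eqref{eqn:BH}, then yields the term $\bigl(\tfrac{1}{4} - \tfrac{\alpha}{10^9}\bigr)\log(C(\pi)(3+|t|)^m)$ in the desired bound. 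The zeros that enter the Jensen argument lie within a disk of bounded radius around $\tfrac{1}{2}+it$, so their imaginary parts satisfy $|\gamma| \leq |t|+6$, and the zero-count on the right-hand side becomes $N_\pi(1-\alpha, |t|+6)$, exactly as stated. The density of such zeros is controlled by \cref{lem:linnik_lemma}, which is already stated with uniform $t$-dependence.

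It remains to bound $2\log|L(\tfrac{3}{2}+it, \pi)|$, which arises in the ST argument when one relates $\log|L(\tfrac{1}{2}+it, \pi)|$ to values on the line $\mathrm{Re}(s) = \tfrac{3}{2}$, by $O(m^2)$ uniformly in $t$. This is immediate from the Luo--Rudnick--Sarnak and M\"uller--Speh bound \eqref{eqn:GRC}, which gives $|\alpha_{j, \pi}(p)| \leq p^{\theta_m}$ with $\theta_m \leq \tfrac{1}{2} - \tfrac{1}{m^2+1}$: expanding the Euler product logarithm,
\[
|\log L(\tfrac{3}{2}+it, \pi)| \leq m \sum_{p}\sum_{k\geq 1}\frac{p^{-k(1+1/(m^2+1))}}{k} \ll m\log m \ll m^2,
\]
and the estimate is $t$-free since $|p^{-it}| = 1$.

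The main obstacle is bookkeeping: one must verify that the Jensen-formula argument of \cite{ST} produces the explicit constants $\tfrac{1}{10^9}$ and $\tfrac{1}{10^7}$ without any degradation when the base point is translated vertically. Since all local ingredients (\cref{lem:mertens} and \cref{lem:linnik_lemma}) are already uniform in $t$ and the Hadamard product for $\Lambda(s,\pi)$ is translation-invariant, this verification is a routine, if tedious, tracking of the $t$-dependence through the ST proof; no new analytic input beyond \cite{ST}, \eqref{eqn:GRC}, and the ingredients already assembled in \cref{sec:L-functions} is needed.
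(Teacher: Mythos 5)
Your proposal is correct and takes essentially the same approach as the paper: both transport \eqref{eqn:ST_1/2} to the shifted base point $\tfrac{1}{2}+it$ (the paper phrases this as adding $it$ to the Langlands parameters $\mu_\pi(j)$), bound the shifted conductor $C(\pi,t)\leq C(\pi)(3+|t|)^m$ via \eqref{eqn:BH}, dominate the resulting zero count by $N_\pi(1-\alpha,|t|+6)$, and absorb $2\log|L(\tfrac{3}{2}+it,\pi)|$ into $O(m^2)$ using \eqref{eqn:GRC}. The paper simply asserts the translation invariance of \eqref{eqn:ST_1/2} in one line rather than walking through the Jensen/Hadamard mechanism, but the route and all the ingredients are identical to yours.
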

\begin{proof}
Recall \eqref{eqn:ST_1/2}.  If $t\in\mathbb{R}$, the bound applies to $L(\frac{1}{2}+it,\pi)$ as well once we add $it$ to all of the Langlands parameters $\mu_{\pi}(j)$ for $1\leq j\leq m$.  Consequently, we have
\begin{equation*}
\begin{aligned}
\log|L(\tfrac{1}{2}+it,\pi)|&\leq\Big(\frac{1}{4}-\frac{\alpha}{10^{9}}\Big)\log C(\pi,t) +2\log|L(\tfrac{3}{2}+it,\pi)|+O(m^2)\\
&+ \frac{\alpha}{10^7} \#\{\rho=\beta+i\gamma\colon L(\rho,\pi)=0,~\beta\geq1-\alpha,~|\gamma-t|\leq 6\}.
\end{aligned}
\end{equation*}
By \eqref{eqn:GRC}, we have $2\log|L(\tfrac{3}{2}+it,\pi)|\ll m^2$.  Furthermore, we have the trivial bound
\[
\#\{\rho=\beta+i\gamma\colon L(\rho,\pi)=0,~\beta\geq1-\alpha,~|\gamma-t|\leq 6\}\ll N_{\pi}(1-\alpha,|t|+6).
\]
The lemma now follows from the bound $C(\pi,|t|+6)\leq C(\pi)(|t|+6)^{m}$ in \eqref{eqn:BH}.
\end{proof}

Let $t\in\mathbb{R}$.  By \eqref{eqn:LFZDE}, all except $O_m((\log Q)^{10^{16}m^4\delta})$ of the $\pi\in\mathfrak{F}_m(Q)$ satisfy
\[
L(s,\pi)\neq0\qquad\textup{when}\quad|\mathrm{Im}(s)|\leq |t|+6,~\mathrm{Re}(s)\geq 1-\frac{10^{9}\delta\log\log C(\pi)}{\log(C(\pi)(|t|+6)^m)}.
\]
This and \cref{lem:ST1/2} prove \cref{thm:weak}(2).

For \cref{thm:weak}(1), let $\chi\pmod{q}$ be a primitive quadratic Dirichlet character whose $L$-function $L(s,\chi)$ has a zero $\beta_{\chi}\in(\frac{1}{2},1)$, and define $\lambda=(1-\beta_{\chi})\log q$.  If $Q=C(\pi)$, $C(\pi)\in[q,q^A]$ for some $A\geq 1$, and $(1-\beta_{\chi})\log(qC(\pi)T)$ is sufficiently small with respect to $m$, then \cref{thm:LFZDE} implies that there exists a constant $c_m>0$, depending at most on $m$, such that
\[
N_{\pi}(\sigma,T)\leq c_m \frac{\lambda}{\log q}\log(C(\pi)T)(C(\pi)T)^{2\times 10^7 m^4(1-\sigma)}.
\]
It follows that $\alpha N_{\pi}(1-\alpha,|t|+6)\ll_m 1$ when
\[
\alpha = \frac{\log(\frac{e\log q}{100\lambda c_m})}{4\times 10^7 m^4\log(C(\pi)(|t|+6)^m)}.
\]
With this choice of $\alpha$ in \cref{lem:ST1/2}, we find that
\begin{align*}
\log|L(\tfrac{1}{2}+it,\pi)|&\leq \frac{1}{4}\log(C(\pi)(|t|+3)^m) - \frac{1}{4\times 10^{16}m^4}\log\Big(\frac{e\log q}{100\lambda c_m }\Big)+O_m(1)\\
&\leq \frac{1}{4}\log(C(\pi)(|t|+3)^m) + \frac{1}{4\times 10^{16}m^4}\log\Big(\frac{A\lambda}{\log C(\pi)}\Big)+O_m(1).
\end{align*}
The claimed result follows once we exponentiate.
\bibliographystyle{abbrv}
\bibliography{JAThorner_v3_QUE_Siegel_zero}

\end{document}